 \newtheorem{theorem}{Theorem}[section]
 \newtheorem{corollary}[theorem]{Corollary}
 \newtheorem{proposition}[theorem]{Proposition}
 \theoremstyle{definition}
 \newtheorem{definition}[theorem]{Definition}
 \theoremstyle{remark}
 \newtheorem{rem}[theorem]{\bf{Remark}}
 \newtheorem*{ack}{\bf{Acknowledgement}}
 \numberwithin{equation}{section}
\begin{document}

%-------------------------------------------------------------------------
% editorial commands: to be inserted by the editorial office
%
%\firstpage{1} \volume{228} \Copyrightyear{2004} \DOI{003-0001}
%
%
%\seriesextra{Just an add-on}
%\seriesextraline{This is the Concrete Title of this Book\br H.E. R and S.T.C. W, Eds.}
%
% for journals:
%
%\firstpage{1}
%\issuenumber{1}
%\Volumeandyear{1 (2004)}
%\Copyrightyear{2004}
%\DOI{003-xxxx-y}
%\Signet
%\commby{inhouse}
%\submitted{March 14, 2003}
%\received{March 16, 2000}
%\revised{June 1, 2000}
%\accepted{July 22, 2000}
%
%
%
%---------------------------------------------------------------------------
%Insert here the title, affiliations and abstract:
%

\title[New curvature tensors along Riemannian submersion]
{New curvature tensors along Riemannian submersions}

%\author{Hakan Mete Ta\d{s}tan$^1$}
%\address {$^1$\.{I}stanbul University \\
%Department of Mathematics\\
%Vezneciler, 34134, \.{I}stanbul\\
%Turkey}
%\email{hakmete@istanbul.edu.tr}
%\author{Fatma \"{O}zdem\.{i}r$^2$ and Cem Sayar$^2$}
%\address{$^2$\.{I}stanbul Technical University\\ Faculty of Science and Letters,\\ Department of %Mathematics\\ 34469, Maslak /\.{I}stanbul Turkey}
%\email{fozdemir@itu.edu.tr and sayarce@itu.edu.tr}
%\author[C. Sayar]{Cem Sayar$^1$}
%\address{$^1$Istanbul Technical University\\ Faculty of Science and Letters,\\ Department of Mathematics\\ 34469, Maslak /\.{I}stanbul Turkey}
%\email{sayarce@itu.edu.tr}
\author[M. A. Akyol]{Mehmet Akif Akyol}
\address{Bingol University\\ Faculty of Arts and Sciences,\\ Department of Mathematics\\ 12000, Bing\"{o}l, Turkey}
\email{mehmetakifakyol@bingol.edu.tr}
\author[G. Ayar]{G\"{u}lhan Ayar}
\address{Karaman Mehmet Bey University\\ Department of Mathematics \\ 70000, Karaman, Turkey}
\email{gulhanayar@kmu.edu.tr}

%%----------Author 1
%\author[]{Birkh\"{a}user Publishing Ltd.}

%\address{%
%Viaduktstr. 42\\
%P.O. Box 133\\
%CH 4010 Basel\\
%Switzerland}

%\email{info@birkhauser.ch}

%\thanks{This work was completed with the support of our
%\TeX-pert.}
%----------Author 2
%\author{A Second Author}
%\address{The address of\br
%the second author\br
%sitting somewhere\br
%in the world}
%\email{dont@know.who.knows}
%----------classification, keywords, date
\subjclass{Primary 53C15, 53B20}

\keywords{Riemannian submersion, Weyl projective  curvature tensor, $M$-projective curvature tensor, concircular curvature tensor, conformal curvature tensor, conharmonic curvature tensor.}

\date{January 1, 2004}
%----------additions
%\dedicatory{To my boss}
%%% ----------------------------------------------------------------------

\begin{abstract}
In 1966, B. O'Neill [The fundamental equations of a submersion, Michigan Math. J., Volume 13, Issue 4 (1966), 459-469.] obtained some fundamental equations and curvature relations between the total space, the base space and the fibres of a submersion.  In the present paper, we define new curvature tensors  along Riemannian submersions such as Weyl projective curvature tensor, concircular curvature tensor, conharmonic curvature tensor, conformal curvature tensor and $M-$projective curvature tensor, respectively. Finally, we obtain some results in case of the total space of Riemannian submersions has umbilical fibres for any curvature tensors mentioned by the above. 
\end{abstract}

\maketitle

\section{Introduction and Preliminaries}

In differential geometry, an important tool to define the curvature of $n-$ dimensional spaces (such as Riemannian manifolds) is the Riemannian curvature tensor. The tensor has played an important role both general relativty and gravity. In this manner, Mishra in \cite{mishra}  defined some new curvature tensors on Riemannian manifolds such as concircular curvature tensor, conharmonic curvature tensor, conformal curvature tensor, respectively. Taking into account the paper of Mishra, Pokhariyal and Mishra defined the Wely projective curvature tensor on Riemannian manifolds \cite{PM}. Afterwards, Ojha defined $M-$ projective curvature tensor \cite{OJ}. 

Curvature tensors also play an important role in physics. Relevance in physics of the tensors considered in our work and some of the important studies that focus on the geometric properties of curvature tensors are; in 1988 conditions of conharmonic curvature tensor of Kaehler hypersurfaces in complex space forms has been analysed by M. Doric et al. \cite{doric1}. By the way, the relativistic significance of concircular curvature tensor has been studied by Ahsan \cite{Ahsan1} and this tensor has been explored in that study. Finaly in 2018  based on Einstein’s geodesic postulate, projectively related connections on a space-time manifold and the closely related Weyl projective tensor has been examined detailed by G. Hall \cite{Hall1}.

Riemannian submersion appears to have been studied and its differential geometry has first defined by  O'Neill 1966 and  Gray 1967 \cite{O}. We note that Riemannian submersions have been studied widely not only in mathematics, but also in theoretical pyhsics because of their applications in the Yang-Mills theory, Kaluza Klein theory, super gravity, relativity and superstring theories (see \cite{BL1}, \cite{BL}, \cite{IV}, \cite{IV1}, \cite{M}, \cite{W1}). Most of the studies related to Riemannian submersion can be found in the books (\cite{FIP}, \cite{Sahin}). In 1966, B. O'Neill has defined a paper related to some fundamental equations of a submersion. In that paper, he has given some curvature relations on Riemannian submersions. 

In this study, in addition to the curvature relations previously defined on Riemannian submersion, we investigate new curvature tensors on a Riemannian submersion and the curvature properties of these tensors.
In the present paper, in the first part of our study, the basic definitions and theorems that we will use throughout the paper are given. In sections 2-6 include the Weyl projective curvature tensor, concircular curvature tensor, conharmonic curvature tensor, conformal curvature tensor and $M$-projective curvature tensor relations  for a Riemannian submersion respectively.  Also various results are obtained by examining the conditions for having total umbilical fibers.

Now, we will give the basic definitions and theorems without proofs that we will use throughout the paper.

\begin{definition}\rm
Let $(M,g)$ and $(N,g_{\text{\tiny$N$}})$ be Riemannian manifolds,
where-$dim(M)>dim(N)$. A surjective mapping
$\pi:(M,g)\rightarrow(N,g_{N})$ is called a \emph{Riemannian
	submersion}
\cite{O} if:\\
\textbf{(S1)}\quad The rank of $\pi$ equals $dim(N)$.\\
In this case, for each $q\in N$, $\pi^{-1}(q)=\pi_{q}^{-1}$ is a $k$-dimensional
submanifold of $M$ and called a \emph{fiber}, where $k=dim(M)-dim(N).$
A vector field on $M$ is called \emph{vertical} (resp.
\emph{horizontal}) if it is always tangent (resp. orthogonal) to
fibers. A vector field $X$ on $M$ is called \emph{basic} if $X$ is
horizontal and $\pi$-related to a vector field $X_{*}$ on $N,$ i.e. ,
$\pi_{*}(X_{p})=X_{*\pi(p)}$ for all $p\in M,$ where $\pi_{*}$ is derivative or differential map of $\pi.$
We will denote by $\mathcal{V}$ and $\mathcal{H}$ the projections on the vertical
distribution $ker\pi_{*}$, and the horizontal distribution
$ker\pi_{*}^{\bot},$ respectively. As usual, the manifold $(M,g)$ is called \emph{total manifold} and
the manifold $(N,g_{N})$ is called \emph{base manifold} of the submersion $\pi:(M,g)\rightarrow(N,g_{N})$.\\
\textbf{(S2)}\quad $\pi_{*}$ preserves the lengths of the horizontal vectors.\\
This condition is equivalent to say that the derivative map $\pi_{*}$ of $\pi$, restricted to $ker\pi_{*}^{\bot},$ is a linear
isometry.
\end{definition}
If $X$ and $Y$ are the basic vector fields, $\pi$-related to $X_{N}, Y_{N}$, we have the following facts:
\begin{enumerate}
	\item{} $g(X,Y)=g_{N}(X_{N},Y_{N})\circ\pi$,\\
	\item{} $h[X,Y]$ is the basic vector field $\pi$-related to $[X_{N},Y_{N}]$,\\
	\item{} $h(\nabla_{X}Y)$ is the basic vector field $\pi$-related to ${\nabla^{N}}_{{X}_{N}}Y_{N}$, \\
\end{enumerate}
for any vertical vector field $V$, $[X,V]$ is the vertical.\\
\indent
The geometry of Riemannian
submersions is characterized by O'Neill's tensors $\mathcal{T}$ and
$\mathcal{A}$, defined as follows:

\begin{equation}\label{e1}
\mathcal{T}_{E}F=v\nabla_{vE}hF+h\nabla_{vE}vF,
\end{equation}
\begin{equation}\label{e2}
\mathcal{A}_{E}F=v\nabla_{hE}hF+h\nabla_{hE}vF
\end{equation}
for any vector fields $E$ and $F$ on $M,$ where $\nabla$ is the
Levi-Civita connection, $v$ and  $h$ are orthogonal projections on vertical and horizontal spaces, respectively.

%\begin{definition}
%	$(M,g)$ and $(B,g^{\prime })$ Riemannian manifolds,
%	\begin{equation*}
%	\pi :(M,g)\rightarrow (B,g^{\prime })
%	\end{equation*}%
%	a Riemannian submersion. In this case $(1,2)-$order $T$ tensor field
%	identified by%
%	\begin{equation*}
%	T(E,F)=T_{E}F=h\nabla _{vE}vF+v\nabla _{vE}hF
%	\end{equation*}
%	where $E,F\in \chi (M)$ , $v$, $h$ symbols are orthogonal projections on $\mathcal{V}$
%	and $\mathcal{H}$  respectively and $\nabla $ is the Levi-Civita connection of $(M,g)$.
%\end{definition}
%
%\begin{definition}
%	$\ (M,g)$ and $(B,g^{\prime })$ Riemannian manifolds,
%	\begin{equation*}
%	\pi :(M,g)\rightarrow (B,g^{\prime })
%	\end{equation*}%
%	a Riemannian submersion. In this case $(1,2)-$order $A$ tensor field
%	identified by%
%	\begin{equation*}
%	A(E,F)=A_{E}F=v\nabla _{hE}hF+h\nabla _{hE}vF
%	\end{equation*}
%	where $E,F\in \chi (M)$ , $v$,$h$ symbols are orthogonal projections on $V$
%	and $H$ respectively and $\nabla $ is the Levi-Civita connection of $(M,g)$.
%\end{definition}

%\begin{definition}\rm
%	Let  $(M, g)$ and $(G, g^\prime)$ Riemannian manifolds of $n-$dimensional and the horizontal distribution of $(M, g)$ is $\mathcal{H}$.
%	Let's show $(1,3)$-order curvature tensor field on $X^h (M)$ with $R^*$. For any $X, Y, Z \in \chi^h (M)$ and $p \in  M$
%	\begin{equation*}\label{1}
%	R^{G}_{\pi(p)}(\pi_{*p}X_p, \pi_{*p}Y_p, \pi_{*p}Z_p).
%	\end{equation*}
%\end{definition}

We now recall the following curvature relations for a Riemannian submersion from \cite{FIP} and \cite{O}.
\begin{theorem}\label{thm1}
	$(M, g)$ and $(G, g^\prime)$ Riemannian manifolds,
	$$\pi: (M, g) \to (G, g^\prime)$$
	a Riemannian submersion and $R^M$, $R^G$ and $\hat{R}$ be Riemannian curvature tensors of $M,G $ and $(\pi^{-1} (x), \hat{g}_x)$ fibre respectively. In this case, there are the following equations for any $U, V, W, F \in \chi^v (M)$ and $X, Y, Z, H \in \chi^h (M)$
	\begin{align}\label{g2}
	g(R^M(X,Y)Z,H) &= g({R}^G(X,Y)Z,H) + 2g(A_X Y, A_Z H)\nonumber \\
	&- g(A_Y Z, A_X H)+ g(A_X Z, A_Y H),
	\end{align}
	\begin{align}\label{g3}
	g(R^M(X,Y)Z,V)&= -g((\nabla_Z A)_X Y, V) - g(A_X Y, T_V Z) \nonumber\\
	&+g(A_Y Z, T_V X)- g(A_X Z, T_V Y),
	\end{align}
	\begin{align}\label{g4}
	g(R^M(X,Y)V,W) &= g((\nabla_V A)_X Y,W)- g((\nabla_W A)_X Y,V)\nonumber \\
	&+ g(A_X V A_Y W)+ g(A_X W, A_Y V) \nonumber\\
	&- g(T_V X, T_W Y)+ g(T_W X, T_V Y),
	\end{align}
	\begin{align}\label{g5}
	g(R^M(X,V)Y,W) &= g((\nabla_X T)_V W,Y)- g((\nabla_V A)_X Y,W)\nonumber \\
	&- g(T_V X, T_W Y)+ g(A_X V, A_Y W),
	\end{align}
	\begin{align}\label{g6}
	g(R^M(U,V)W,X)&=g((\nabla_U T)_V W, X) - g((\nabla_V T)_U W, X)
	\end{align}
	and
	\begin{align}\label{g1}
	g(R^M(U,V)W,F) &=g(\hat{R}(U,V)W,F) + g(T_U W, T_V F)- g(T_V W, T_U F).
	\end{align}
\end{theorem}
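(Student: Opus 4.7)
The strategy is to reduce each of the six identities to the definition
\[
R^M(E,F)G = \nabla_E\nabla_F G - \nabla_F\nabla_E G - \nabla_{[E,F]}G
\]
by substituting the decomposition of $\nabla$ relative to the orthogonal splitting $TM = \ker\pi_* \oplus \ker\pi_*^\bot$. The preparatory step is to derive from \eqref{e1}--\eqref{e2} the reduction formulas
\[
\nabla_U V = \hat{\nabla}_U V + \mathcal{T}_U V,\qquad \nabla_X Y = h\nabla_X Y + \mathcal{A}_X Y,
\]
\[
\nabla_U X = \mathcal{T}_U X + h\nabla_U X,\qquad \nabla_X U = \mathcal{A}_X U + v\nabla_X U,
\]
for vertical $U,V$ and horizontal (basic) $X,Y$, together with the facts that $\hat{\nabla}$ is the Levi-Civita connection induced on each fibre and that on basic fields $h\nabla_X Y$ is $\pi$-related to $\nabla^{G}_{X_*}Y_*$.

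Before starting the expansions I would record the algebraic identities that collapse cross terms: $\mathcal{T}_E$ and $\mathcal{A}_E$ both interchange the vertical and horizontal distributions and are skew-symmetric with respect to $g$; $\mathcal{T}$ is symmetric on vertical inputs, $\mathcal{T}_U V = \mathcal{T}_V U$; and $\mathcal{A}_X Y = -\mathcal{A}_Y X = \tfrac{1}{2}v[X,Y]$ is skew on horizontal inputs. I would then treat the identities in order of increasing complexity. Identity \eqref{g1}, with four vertical entries, is the Gauss equation for a fibre viewed as a submanifold of $M$ once $\mathcal{T}$ is identified with the second fundamental form of the fibres; substituting the reduction formulas directly yields $\hat{R}$ plus the two quadratic $\mathcal{T}$-terms. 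Identity \eqref{g6} with three vertical and one horizontal entry is the Codazzi-type companion and involves only $\mathcal{T}$ and $\hat{\nabla}$. Identities \eqref{g4} and \eqref{g5} with two vertical and two horizontal arguments are obtained by the same substitute-and-cancel scheme, except that both $\mathcal{T}$- and $\mathcal{A}$-terms now enter.

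The genuinely delicate case is the horizontal identity \eqref{g2}: all four arguments are horizontal and the right-hand side must reproduce $R^G$. The key observation is that on basic fields $h\nabla_X Y$ descends to $\nabla^{G}_{X_*}Y_*$, so expanding $R^M(X,Y)Z$ and extracting its horizontal component yields $R^G(X_*,Y_*)Z_*\circ\pi$ modulo three quadratic $\mathcal{A}$-corrections coming from the vertical pieces $v\nabla_X Y = \mathcal{A}_X Y$. Identity \eqref{g3} is the Codazzi-type companion and requires $(\nabla\mathcal{A})$. Throughout, the main obstacle is bookkeeping rather than any single deep step: covariant derivatives like
\[
(\nabla_E \mathcal{A})_F G = \nabla_E(\mathcal{A}_F G) - \mathcal{A}_{\nabla_E F} G - \mathcal{A}_F \nabla_E G
\]
must be split piece by piece, and the Lie-bracket term $\nabla_{[E,F]}G$ decomposed via $[E,F] = v[E,F] + h[E,F]$. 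A disciplined use of the metric skew-symmetries $g((\nabla_E \mathcal{A})_F G, H) = -g(G, (\nabla_E \mathcal{A})_F H)$ and their $\mathcal{T}$-analogues then collapses the raw expansion into the compact right-hand sides stated in the theorem. No global or topological input is needed; every step is pointwise tensorial.
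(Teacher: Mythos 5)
The paper states this theorem without proof — it is recalled verbatim from O'Neill's 1966 paper and the book of Falcitelli–Ianuş–Pastore — and your plan is precisely the classical derivation given in those references: decompose $\nabla$ through $\mathcal{T}$ and $\mathcal{A}$, use their skew-symmetry and the identities $\mathcal{T}_U V=\mathcal{T}_V U$ and $\mathcal{A}_X Y=\tfrac12 v[X,Y]$, and read off the six identities as Gauss–Codazzi-type equations for the fibres and the horizontal distribution. The outline is sound and identifies all the needed ingredients; the only caveat is that the final signs depend on fixing the convention $R(E,F)=\nabla_E\nabla_F-\nabla_F\nabla_E-\nabla_{[E,F]}$ consistently, and the lemma $\mathcal{A}_X Y=\tfrac12 v[X,Y]$, which you record as a known fact, itself requires the short argument that $\mathcal{A}$ is alternating on horizontal vectors.
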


\begin{definition}\rm\cite{FIP}
	Let $(M, g)$ be a Riemannian manifold and a local orthonormal frame of the vertical distribution $\nu$ is $\{U_j\}_{1\leq j\leq r} $.
	Then $N$, the horizontal vector field on $(M, g)$ is locally defined by
	$$N = \sum_{j=1}^{r}  T_{U_j}U_j.$$
\end{definition}
\begin{proposition}\label{pro1}
	Let  $(M, g)$ and $(G, g^\prime)$ Riemannian manifolds,
	$$\pi: (M, g)\to (G, g^\prime) $$
	a Riemannian submersion
	and $\{X_i, U_j\}$ be  a $\pi$-compatible frame.
	
	In this case, for any $U, V \in \chi^v (M)$ and $X, Y \in  \chi^h (M)$, the Ricci tensor $S^M$ satisfies the following equations \cite{FIP}:
	
	\begin{align}\label{S1}
	(i) \,\,\,\,\,\,  S^M(U,V) &= \hat{S}(U,V) -g(N,T_U V) \\
	&+ \sum_{i} \{g((\nabla_{X_i}T)_U V,X_i)+g(A_{X_i}U, A_{X_i}V)\}, \nonumber\\
	(ii) \,\,\,\,\,\, S^M(X,Y) &=S^G (X', Y') \circ \pi + \frac{1}{2}
	\{g(\nabla_X N,Y)+g(\nabla_Y N, X) \} \label{S2}\\
	&-2 \sum_{i} g(A_X X_i, A_Y X_i) - \sum_{j} g(T_{U_j}X,T_{U_j}Y),\nonumber\\
	(iii) \,\,\,\,\,\, S^M(U,X) &= g(\nabla_U N,X) -\sum_{j} g(\nabla_{U_j}T)_{U_j}U,X) \label{S3} \\
	&+ \sum_{i} \{g((\nabla_{X_i}A)_{X_i} X,U)-2g(A_X {X_i}, T_U X_i)\}. \nonumber
	\end{align}
\end{proposition}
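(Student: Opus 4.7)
The plan is to compute $S^M$ directly from its definition as the trace of $R^M$, by applying the curvature identities of Theorem \ref{thm1} term by term. With respect to the $\pi$-compatible orthonormal frame $\{X_i, U_j\}$, one writes
$$S^M(E,F) \;=\; \sum_i g(R^M(X_i,E)F, X_i) \;+\; \sum_j g(R^M(U_j,E)F, U_j)$$
for any tangent vectors $E, F$. For each of the three parts, I plug in the relevant pair $(E,F)$ and substitute the curvature identity from Theorem \ref{thm1} whose type signature (horizontal/vertical slots) matches each summand.

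For part (i), with $E=U$, $F=V$ both vertical: the horizontal-frame summand has type (horizontal, vertical, vertical, horizontal), so I first apply the pair symmetry $g(R^M(A,B)C,D)=g(R^M(C,D)A,B)$ together with antisymmetry in the first pair to bring it into the form required by equation (\ref{g5}); this produces the traces $\sum_i g((\nabla_{X_i}T)_U V,X_i)$ and $\sum_i g(A_{X_i}U, A_{X_i}V)$, plus a $T^2$-trace. The vertical-frame summand is of pure vertical type, so (\ref{g1}) applies directly and contributes $\hat S(U,V)$, a term that collapses to $-g(N,T_U V)$ via the definition $N=\sum_j T_{U_j}U_j$ and the symmetry $T_V U=T_U V$ of $T$ on vertical fields, together with the same $T^2$-trace of opposite sign, so the two residual $T^2$-traces cancel.

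For part (ii), $E=X$, $F=Y$ horizontal: the horizontal-frame summand is of pure horizontal type, so equation (\ref{g2}) applies and directly delivers $S^G(X',Y')\circ\pi$ together with a trace of $A\cdot A$ terms that reduces, after using the antisymmetry of $A$ on horizontals, to $-2\sum_i g(A_X X_i, A_Y X_i)$. The vertical-frame summand is of mixed type and is handled by (\ref{g5}) (after a symmetry swap); this yields the $T^2$-trace $-\sum_j g(T_{U_j}X, T_{U_j}Y)$ and a $\nabla T$ trace which, after summation and the symmetrization in $(X,Y)$ forced by the symmetry of $S^M$, assembles into $\frac{1}{2}\{g(\nabla_X N,Y)+g(\nabla_Y N,X)\}$. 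For part (iii), with $E=U$ vertical and $F=X$ horizontal, the horizontal sum is handled by (\ref{g3}) (yielding the $\nabla A$ trace and an $A\cdot T$ cross-trace) and the vertical sum by (\ref{g6}) (yielding the trace $-\sum_j g((\nabla_{U_j}T)_{U_j}U,X)$); the term $g(\nabla_U N,X)$ then emerges by differentiating $N=\sum_j T_{U_j}U_j$ along $U$ and choosing $\{U_j\}$ geodesic at the reference point.

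The main obstacle throughout is purely symbolic bookkeeping: tracking the signs generated by repeated use of the symmetries of the Riemann tensor, recognizing the various traces as divergences of $N$ or as fibre Ricci contributions, and matching up the horizontal and vertical $T^2$-traces so that the unwanted pieces cancel. No new geometric input beyond Theorem \ref{thm1}, the symmetry of $T$ on vertical fields, and the definition of $N$ is required; the whole proof is a disciplined contraction of the O'Neill equations against the $\pi$-compatible frame.
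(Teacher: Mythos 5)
The paper gives no proof of this proposition --- it is quoted from \cite{FIP} --- and your argument is exactly the standard derivation found there: contract the O'Neill identities of Theorem \ref{thm1} against a $\pi$-compatible frame, use the pair and skew symmetries of $R^M$, the symmetry of $T$ on vertical fields, the alternating property of $A$ on horizontal fields, and the definition of $N$, and let the two residual $T^2$-traces cancel. The plan is sound; the one slip in the prose is in part (ii), where the coefficient in $-2\sum_i g(A_XX_i,A_YX_i)$ is not produced by the horizontal-frame summand alone (tracing \eqref{g2} over $\{X_i\}$ yields a coefficient of $3$ in absolute value), but requires the $g(A_XV,A_YW)$ term of \eqref{g5} coming from the vertical-frame summand (via $\sum_j g(A_XU_j,A_YU_j)=\sum_i g(A_XX_i,A_YX_i)$) to bring it down to $2$ --- a term your setup does include, so this is bookkeeping rather than a gap.
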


\begin{proposition}\label{pro2} \cite{FIP}
	Let's take the scalar curvatures of $(M, g), (G, g^\prime)$ Riemannian manifolds and $x \in G, \pi^{-1} (x)$ fibre with $r^M, r^G$ and $\hat{r}$, respectively.
	In a
	$$\pi: (M, g) \to (G, g^\prime)$$
	Riemannian submersion, $(M, g)$ depends on the scalar curvature of the Riemannian manifold $r^G$ and the scalar curve of any lift $\hat{r}$. In this case
	\begin{equation}\label{Pro2}
	r^M= \hat{r}+r^G \circ \pi - ||N||^2-||A||^2- ||T||^2+ 2 \sum_{i} g(\nabla_{X_i} N, X_i).
	\end{equation}
\end{proposition}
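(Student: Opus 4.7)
The strategy is to compute $r^{M}$ by tracing the Ricci tensor $S^{M}$ over a $\pi$-compatible orthonormal frame $\{X_{i}\}\cup\{U_{j}\}$,
$$r^{M} = \sum_{i} S^{M}(X_{i},X_{i}) + \sum_{j} S^{M}(U_{j},U_{j}),$$
substituting (\ref{S1}) and (\ref{S2}) from Proposition \ref{pro1}, and regrouping the resulting terms into the five blocks appearing on the right-hand side of (\ref{Pro2}).

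Summing (\ref{S1}) over $j$ produces $\hat{r}$ (the fibrewise Ricci trace), the contribution $-\sum_{j}g(N,T_{U_{j}}U_{j}) = -\|N\|^{2}$ by the very definition of $N$, plus the mixed term $\sum_{i,j}g((\nabla_{X_{i}}T)_{U_{j}}U_{j},X_{i}) + \sum_{i,j}\|A_{X_{i}}U_{j}\|^{2}$. Summing (\ref{S2}) over $i$ delivers $r^{G}\circ\pi$, the horizontal divergence term $\sum_{i}g(\nabla_{X_{i}}N,X_{i})$ (the symmetric bracket $\tfrac{1}{2}\{\cdots\}$ collapses on the diagonal), together with $-2\sum_{i,k}\|A_{X_{i}}X_{k}\|^{2}$ and $-\sum_{i,j}\|T_{U_{j}}X_{i}\|^{2}$. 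Using the standard skew-symmetries $g(A_{X}Y,V) = -g(A_{X}V,Y)$ and $g(T_{V}E,F) = -g(E,T_{V}F)$, one identifies $\sum_{i,j}\|A_{X_{i}}U_{j}\|^{2} = \sum_{i,k}\|A_{X_{i}}X_{k}\|^{2} = \|A\|^{2}$ and $\sum_{i,j}\|T_{U_{j}}X_{i}\|^{2} = \|T\|^{2}$, so the net $A$- and $T$-contributions collapse to $-\|A\|^{2}$ and $-\|T\|^{2}$ respectively.

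What remains is the identity
$$\sum_{i,j} g\bigl((\nabla_{X_{i}}T)_{U_{j}}U_{j},X_{i}\bigr) = \sum_{i} g(\nabla_{X_{i}}N,X_{i}),$$
which, when added to the copy already produced by (\ref{S2}), supplies the coefficient $2$ on the last summand of (\ref{Pro2}). To prove it I would expand $(\nabla_{X_{i}}T)_{U_{j}}U_{j} = \nabla_{X_{i}}(T_{U_{j}}U_{j}) - T_{\nabla_{X_{i}}U_{j}}U_{j} - T_{U_{j}}\nabla_{X_{i}}U_{j}$ and sum in $j$: the first piece assembles into $\nabla_{X_{i}}N$ directly. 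For the two correction terms I would decompose $\nabla_{X_{i}}U_{j} = \sum_{k}\alpha_{ij}^{k}U_{k} + \sum_{m}\beta_{ij}^{m}X_{m}$. The horizontal coefficients $\beta_{ij}^{m}$ feed only into $T_{U_{j}}X_{m}$, which is vertical and hence pairs trivially with the horizontal $X_{i}$; the vertical coefficients satisfy $\alpha_{ij}^{k} = -\alpha_{ik}^{j}$ (from $g(U_{j},U_{k}) = \delta_{jk}$), and this antisymmetry combined with the symmetry $T_{U_{j}}U_{k} = T_{U_{k}}U_{j}$ forces the remaining double sum $\sum_{i,j,k}\alpha_{ij}^{k}g(T_{U_{k}}U_{j},X_{i})$ to equal its own negative, hence to vanish.

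The principal obstacle is precisely this cancellation in the $(\nabla T)$-trace: it is the one place where a non-tensorial frame quantity (the connection coefficients $\alpha,\beta$) appears, and one has to exploit \emph{all} of the algebraic properties of $T$ (symmetry on vertical inputs, vertical-valuedness on vertical$\times$horizontal arguments, and skew-self-adjointness of each $T_{U}$) in order to see the cancellation. Once that step is secured, gathering the surviving pieces yields (\ref{Pro2}) exactly.
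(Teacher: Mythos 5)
Your proof is correct and follows the standard derivation: the paper states this proposition without proof, citing \cite{FIP}, where it is obtained exactly as you propose by tracing \eqref{S1} and \eqref{S2} over a $\pi$-compatible orthonormal frame and regrouping. The one delicate step, $\sum_{i,j} g((\nabla_{X_i}T)_{U_j}U_j, X_i)=\sum_i g(\nabla_{X_i}N,X_i)$, is handled soundly: the horizontal component of $\nabla_{X_i}U_j$ drops out because $T_{U_j}X_m$ is vertical, and the vertical component cancels by pairing the skew-symmetry $\alpha_{ij}^k=-\alpha_{ik}^j$ (from orthonormality of the $U_j$) against the symmetry $T_{U_j}U_k=T_{U_k}U_j$.
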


%%%%%%%%%%%%%%%%%%%%%%%%%%%%%%%%%%%%%%%%%%%%%%%%%%%%%%%%%%%%%%%%%%%%%%%%%%%%%%%%%%%%%%%%%%%%%%%%%%%%%%%%%%%%%%%%%%%%%%
\section{Weyl projective curvature tensor along a Riemannian submersion}
In this section, we examine the Weyl projective curvature tensor relations between the total space, the base space and fibres on  a Riemannian submersion. We also give a corollary in case of the Riemannian submersion has totally umbilical fibres.
\begin{definition}\cite{mishra}
	Let take an $n$-dimensional differentiable manifold $M$ with
	differentiability class $C^{\infty }$. In the $n-$dimensional space $V_n$, the tensor
	\begin{equation*}
	P^*(X,Y)Z=R^M(X,Y)Z-\frac{1}{n-1}\{S^M(Y,Z)X-S^M(X,Z)Y\}. \label{5.3}
	\end{equation*}
	is called Weyl projective curvature tensor, where Ricci tensor of total space denoted by $S^{M}.$
\end{definition}
Now, we have the following main theorem.
\begin{theorem}
	Let $(M, g)$ and $(G, g^\prime)$ Riemannian manifolds,
	$$\pi: (M, g) \to(G, g^\prime)$$
	a Riemannian submersion and $R^M$, $R^G$ and $\hat{R}$ be Riemannian curvature tensors, $S^M$, $S^G$ and $\hat{S}$ be Ricci tensors of $M, \, G$ and the fibre respectively. Then for any $U, V, W, F \in \chi^v (M)$ and $X, Y, Z, H\in  \chi^h (M)$, we have the following relations for Weyl projective curvature tensor:
	\begin{align*}
		&g(P^*(X,Y)Z, H)\!\!=\!\!g(R^G (X,Y)Z, H)\!+\!2g (A_{X}Y, A_{Z}H)\!-\!g (A_{Y}Z, A_{X}H)\!+\!g (A_{X}Z, A_{Y}H)\\
		&- \frac{1}{n-1} \Bigg\{ g(X,H) \bigg[S^G (Y', Z') \circ \pi + \frac{1}{2} \left(g(\nabla_Y N,Z)+ g(\nabla_Z N,Y )  \right) \\
		&-2 \sum_{i} g(A_Y X_i, A_Z X_i) - \sum_{j} g(T_{U_j}Y, T_{U_j}Z)\bigg] \\
		&- g(Y,H) \bigg[S^G (X', Z') \circ \pi + \frac{1}{2} \left(g(\nabla_X N,Z)+ g(\nabla_Z N,X )  \right) \\
		&-2 \sum_{i} g(A_X X_i, A_Z X_i) - \sum_{j} g(T_{U_j}X, T_{U_j}Z)\bigg] \Bigg\},
	\end{align*}
	
	\begin{align*}
		g(P^*(X,Y)Z, V) &=-g((\nabla_Z A)_X Y, V) - g(A_X Y, T_V Z)+ g(A_Y Z, T_V X)\\
		&- g(A_X Z, T_V Y),
	\end{align*}
	
	\begin{align*}
		g(P^*(X,Y)V, W) &= g((\nabla_V A)_X Y, W) - g((\nabla_W A)_X Y, V) + g(A_X V, A_Y W)\\
		& - g(A_X W, A_Y V)- g(T_V X, T_W Y)+ g(T_W X, T_V Y),
	\end{align*}
	
	\begin{align*}
		g(P^*(X,V)Y, W) &=  g((\nabla_X T)_V W, Y) + g((\nabla_V A)_X Y, W) 
		- g(T_V X, T_W Y)\\
		& + g(A_X V, A_Y W),
	\end{align*}
	\begin{align*}
		g(P^*(U,V)W, X)&=g((\nabla_U T)_V W, X) - g((\nabla_V T)_U W, X)
	\end{align*}
	and
	\begin{align*}
		g(P^*(U,V)W, F)&= g( \hat{R} (U,V)W , F)+ g (T_{U}W, T_{V}F )- g (T_{V}W, T_{U}F) \\
		&- \frac{1}{n-1} \Bigg\{ g(F,U) \bigg[\hat{S} (V, W) - g(N,T_V W)  \\
		&+  \sum_{i} \left( g((\nabla_{X_i} T)_V W, X_i)+g(A_{X_i} V, A_{X_i} W)\right)\bigg] \\
		&- g(F,V) \bigg[\hat{S} (U, W) - g(N,T_U W)   \\
		&+  \sum_{i} \left( g((\nabla_{X_i} T)_U W, X_i)+g(A_{X_i} U, A_{X_i} W)\right)\bigg]\Bigg\}.
	\end{align*}
\end{theorem}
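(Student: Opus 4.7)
The plan is to establish each of the six formulas separately by applying the definition
\[
P^{*}(X,Y)Z=R^{M}(X,Y)Z-\tfrac{1}{n-1}\bigl\{S^{M}(Y,Z)X-S^{M}(X,Z)Y\bigr\},
\]
pairing it with the appropriate fourth argument, and then substituting the relevant curvature identity from Theorem~\ref{thm1} together with the relevant Ricci identity from Proposition~\ref{pro1}. Throughout, the only structural facts I will use are the orthogonality $g(\mathcal{H},\mathcal{V})=0$ and the six identities \eqref{g2}--\eqref{g1} together with \eqref{S1}--\eqref{S3}.

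The first observation that organizes the work is that, after pairing $P^{*}(E_{1},E_{2})E_{3}$ with a fourth vector $E_{4}$, the Ricci correction becomes
\[
\tfrac{1}{n-1}\bigl\{S^{M}(E_{2},E_{3})\,g(E_{1},E_{4})-S^{M}(E_{1},E_{3})\,g(E_{2},E_{4})\bigr\},
\]
so whenever one of $E_{1},E_{2}$ is horizontal and $E_{4}$ is vertical (or vice versa), the corresponding inner product drops out by orthogonality. This is exactly what kills the correction term in the four mixed cases: for $g(P^{*}(X,Y)Z,V)$ both $g(X,V)$ and $g(Y,V)$ vanish; for $g(P^{*}(X,Y)V,W)$ both $g(X,W)$ and $g(Y,W)$ vanish; for $g(P^{*}(U,V)W,X)$ both $g(U,X)$ and $g(V,X)$ vanish; and a similar check applies to $g(P^{*}(X,V)Y,W)$. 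In each of these cases the stated identity reduces to the corresponding raw curvature equation \eqref{g3}, \eqref{g4}, \eqref{g5}, \eqref{g6}, which I would simply quote from Theorem~\ref{thm1}.

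The two remaining cases are the ones where both arguments $E_{1},E_{2}$ and the partner $E_{4}$ belong to the same distribution, so no orthogonality cancellation occurs. For the fully horizontal case $g(P^{*}(X,Y)Z,H)$, I would substitute \eqref{g2} for the curvature part and then, separately, use \eqref{S2} to expand $S^{M}(Y,Z)$ and $S^{M}(X,Z)$ in terms of $S^{G}$, the horizontal divergence-type terms $g(\nabla_{\cdot}N,\cdot)$, and the $\mathcal{A}$- and $\mathcal{T}$-quadratic sums over the frame; the scalars $g(X,H)$ and $g(Y,H)$ are carried through unchanged, yielding precisely the first displayed identity. For the fully vertical case $g(P^{*}(U,V)W,F)$, I would substitute \eqref{g1} for the curvature part and use \eqref{S1} to expand $S^{M}(V,W)$ and $S^{M}(U,W)$, multiplying by the scalars $g(U,F)$ and $g(V,F)$; this assembles into the last displayed identity.

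The entire proof is therefore a bookkeeping exercise driven by the orthogonal splitting $TM=\mathcal{H}\oplus\mathcal{V}$. The only real care needed is in the two "non-vanishing" cases, where the Ricci substitutions \eqref{S1} and \eqref{S2} must be copied correctly including the signs on the frame sums $\sum_{i}g(A_{X_{i}}\cdot,A_{X_{i}}\cdot)$, $\sum_{j}g(T_{U_{j}}\cdot,T_{U_{j}}\cdot)$, and the covariant-derivative terms $g((\nabla_{X_{i}}T)_{\cdot}\cdot,X_{i})$; I would expect sign and index conventions to be the only real pitfall, since there is no conceptual difficulty beyond unpacking the definition.
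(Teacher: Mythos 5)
Your overall strategy is the same as the paper's: pair $P^*$ with the fourth vector, expand the curvature term via Theorem~\ref{thm1}, expand the Ricci terms via Proposition~\ref{pro1} in the pure-horizontal and pure-vertical cases, and kill the Ricci correction by orthogonality in the mixed cases. That works for three of the four mixed cases, but your blanket claim that ``a similar check applies to $g(P^{*}(X,V)Y,W)$'' is false, and this is a genuine gap. Writing out that case, the correction is
\begin{equation*}
-\tfrac{1}{n-1}\bigl\{S^{M}(V,Y)\,g(X,W)-S^{M}(X,Y)\,g(V,W)\bigr\}.
\end{equation*}
The first product vanishes because $X$ is horizontal and $W$ is vertical, but the second does \emph{not}: both $V$ and $W$ are vertical, so $g(V,W)$ is generically nonzero. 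Hence the term $+\tfrac{1}{n-1}\,g(V,W)\,S^{M}(X,Y)$ survives and must then be expanded via \eqref{S2} (since $X,Y$ are horizontal), exactly as the analogous surviving $g(V,W)$-term is retained in the $(X,V;Y,W)$ identities for the concircular, conharmonic, conformal and $M$-projective tensors later in the paper. (The theorem as printed also omits this term, so your derivation happens to land on the printed formula, but the orthogonality justification you give for dropping it is not valid.)

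Two smaller points of the same kind: reducing $g(P^{*}(X,V)Y,W)$ to ``the raw curvature equation'' should give the sign $-g((\nabla_{V}A)_{X}Y,W)$ appearing in \eqref{g5}, not the $+$ sign in the displayed identity, so you should either flag the discrepancy or state which convention you are following; and in the mixed cases you implicitly also need that the surviving $S^{M}$ factors are multiplied by vanishing inner products rather than arguing anything about $S^{M}$ itself, which your write-up does correctly. Apart from the $(X,V;Y,W)$ case, the bookkeeping you describe matches the paper's proof, which likewise only writes out the fully horizontal case using \eqref{g2} and \eqref{S2} and delegates the rest to ``similarly.''
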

\begin{proof}
	We only give the proof of the $1^{st}$ equation of this theorem. The following equations are obtained inner production with $H$ to $P^*$ and using \eqref{g2} and \eqref{S2} equations.
	\begin{align}
		g(R^M(X,Y)Z,H)&=g(R^G(X,Y)Z,H)+ 2g(A_X Y, A_Z H)- g(A_Y Z,A_X H)\nonumber \\
		&+g(A_X Z, A_Y H),\nonumber 
	\end{align}
	
	\begin{align}
		S^M(Y,Z) &= S^G (Y',Z')  \circ \pi	+ \frac{1}{2} \left\{g(\nabla_Y N,Z)+g(\nabla_Z N, Y)\right\} \nonumber \\
		&-2\sum_{i} g(A_Y Y_i, A_Z Y_i) -\sum_{j}(T_{U_j}Y, T_{U_j}Z)\nonumber 
	\end{align}
	and
	\begin{align}
		S^M(X,Z) &= S^G (Y', Z')  \circ \pi
		+ \frac{1}{2} \left\{g(\nabla_X N,Z)+g(\nabla_Z N, X)\right\} \nonumber\\
		&-2\sum_{i} g(A_X X_i, A_Z X_i) -\sum_{j}(T_{U_j}X, T_{U_j}Z).\nonumber 
	\end{align}
	When these equations are substituted in $P^*$, the given result is obtained. Other equations are similarly proved by using Theorem \ref{thm1} and Proposition \ref{pro1}.
\end{proof}

\begin{corollary}
Let $\pi: (M, g) \to(G, g^\prime)$ be a Riemannian submersion, where $(M, g)$ and $(G, g^\prime)$ Riemannian manifolds. If the Riemannian submersion has total umbilical fibres, that  is $N = 0$,  then the Weyl projective curvature tensor is given by
	\begin{align*}
		&g(P^*(X,Y)Z, H)\\
		&=g(R^G (X,Y) Z , H )+2g (A_{X}Y, A_{Z}H)-g (A_{Y}Z, A_{X}H )+g (A_{X}Z, A_{Y}H )\\
		&- \frac{1}{n-1} \Bigg\{ g(X,H) \bigg[S^G (Y', Z') \circ \pi
		-2 \sum_{i} g(A_Y X_i, A_Z X_i) - \sum_{j} g(T_{U_j}Y, T_{U_j}Z)\bigg] \\
		&- g(Y,H) \bigg[S^G (X', Z') \circ \pi
		-2 \sum_{i} g(A_X X_i, A_Z X_i) - \sum_{j} g(T_{U_j}X, T_{U_j}Z)\bigg] \Bigg\},
	\end{align*}
	and
	\begin{align*}
		&g(P^*(U,V)W, F) \\
		&= g( \hat{R} (U,V)W , F)+ g (T_{U}W, T_{V}F )- g (T_{V}W, T_{U}F) \\
		&- \frac{1}{n-1} \Bigg\{ g(F,U) \bigg[\hat{S} (V, W)
		+  \sum_{i} \left( g((\nabla_{X_i} T)_V W, X_i)+g(A_{X_i} V, A_{X_i} W)\right)\bigg] \\
		&- g(F,V) \bigg[\hat{S} (U, W)
		+  \sum_{i} \left( g((\nabla_{X_i} T)_U W, X_i)+g(A_{X_i} U, A_{X_i} W)\right)\bigg]\Bigg\}.
	\end{align*}
for any $U, V, W, F \in \chi^v (M)$ and $X, Y, Z, H\in  \chi^h (M)$, 
\end{corollary}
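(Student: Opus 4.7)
The plan is to derive the corollary as a direct specialization of the preceding theorem to the case in which the mean curvature vector field $N$ of the fibres vanishes identically. Since $N$ is globally the zero vector field on $M$, we have not only the pointwise identity $N=0$ but also $\nabla_E N = 0$ for every $E \in \chi(M)$; this is immediate from the $C^\infty(M)$-linearity of $\nabla$ in its lower argument. Hence every expression in the theorem that involves $N$ either directly or through a covariant derivative of $N$ must be dropped.

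Concretely, I would take the first identity of the preceding theorem, the formula for $g(P^*(X,Y)Z,H)$, and delete from the two bracketed Ricci-type expressions the terms $\tfrac{1}{2}\bigl(g(\nabla_Y N,Z)+g(\nabla_Z N,Y)\bigr)$ and $\tfrac{1}{2}\bigl(g(\nabla_X N,Z)+g(\nabla_Z N,X)\bigr)$, which are precisely the contributions of the $N$-terms appearing in equation \eqref{S2} of Proposition \ref{pro1}. All remaining summands, namely the base curvature term $g(R^G(X,Y)Z,H)$, the $A$-tensor combinations, and the two bracketed quantities $S^G(Y',Z')\circ\pi - 2\sum_i g(A_Y X_i, A_Z X_i) - \sum_j g(T_{U_j}Y, T_{U_j}Z)$ and its $(X,Z)$-analogue, depend only on the curvature tensors and on the $A,T$ tensors, and are therefore unchanged; collecting them gives the stated horizontal formula.

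For the vertical case I would apply the same procedure to the last identity of the preceding theorem. The only $N$-occurrences are the terms $-g(N, T_V W)$ and $-g(N, T_U W)$ inside the two $\hat{S}$-brackets, coming from \eqref{S1}. Removing these yields exactly the displayed formula for $g(P^*(U,V)W,F)$. The remaining identities of the theorem do not involve $N$ at all, which is why the corollary lists only these two; no separate argument is needed for them.

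There is no real obstacle here, since the corollary is a pure substitution. The only point deserving a brief comment is the observation already made, that the hypothesis $N=0$ as a vector field, not merely at a point, is needed to guarantee the vanishing of the covariant derivative terms $\nabla_\bullet N$; once that is noted, the corollary follows immediately from the theorem by inspection.
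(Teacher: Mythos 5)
Your proposal is correct and follows exactly the route the paper intends: the corollary is a pure specialization of the preceding theorem, obtained by setting $N=0$, which kills precisely the terms $\tfrac12\bigl(g(\nabla_\bullet N,\cdot)+g(\nabla_\cdot N,\bullet)\bigr)$ and $-g(N,T_\bullet\,\cdot)$ inherited from Proposition~\ref{pro1}, while the four identities not restated in the corollary contain no $N$ at all. One minor quibble: the vanishing of $\nabla_E N$ when $N\equiv 0$ follows from additivity of $\nabla_E$ in the argument being differentiated (so that $\nabla_E 0=0$), not from $C^\infty(M)$-linearity in the lower (direction) slot $E$ as you state --- the conclusion is of course still trivially true.
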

\section{Concircular curvature tensor along a Riemannian submersion}
In this section, curvature relations of concircular curvature tensor in a Riemannian submersion are examined. In particular, we show  that the Riemannian submersion with concircular curvature tensor has no the totally umbilical fibres.
\begin{definition}\rm \label{defconcir}
	In the $n-$dimensional space $V_n$, the tensor
	\begin{align*}
		C^*(X,Y,Z,H)&=R^M(X,Y,Z,H)- \frac{r^M}{n(n-1)} [g(X,H)g(Y,Z)-g(Y,H)g(X,Z)],
	\end{align*}
	is called concircular curvature tensor, where scalar tensor denoted by $r^M$\cite{mishra}.
\end{definition}
Now, we have the following main theorem.
\begin{theorem}
	Let $(M, g)$ and $(G, g^\prime)$ Riemannian manifolds,
	$$\pi: (M, g) \to(G, g^\prime)$$
	a Riemannian submersion and $R^M$, $R^G$ and  $\hat{R}$ be Riemannian curvature tensors, $r^M$, $r^G$ and  $\hat{r}$ be scalar curvature tensors of $M, \, G$ and the fibre respectively. Then for any $U, V, W, F \in \chi^v (M)$ and $X, Y, Z, H\in  \chi^h (M)$, we have the following relations
	\begin{align*}
		g(C^*(X,Y)Z, H) &=  g(R^G (X,Y) Z , H )+ 2g (A_{X}Y, A_{Z}H )\\
		&- g (A_{Y}Z, A_{X}H )+ g (A_{X}Z, A_{Y}H )\\
		&- \frac{r^M}{n(n-1)} \Bigg\{ g(Y,Z) g(X,H)- g(X,Z) g(Y,H) \Bigg\},
	\end{align*}
	
	\begin{align*}
		g(C^*(X,Y)Z, V) &= -g((\nabla_Z A)_X Y, V) - g(A_X T, T_V Z) \\
		&+ g(A_Y Z, T_V X) - g(A_X Z, T_V Y),
	\end{align*}
	
	\begin{align*}
		g(C^*(X,Y)V, W) &= g((\nabla_V A)_X Y, W) - g((\nabla_W A)_X Y, V) + g(A_X V, A_Y W) \\
		&- g(A_X W, A_Y V)-g(T_V X, T_W Y)+ g(T_W X, T_V Y),
	\end{align*}
	
	\begin{align*}
		g(C^*(X,V)Y, W) &= g((\nabla_X T)_V W, Y) + g((\nabla_V A)_X Y, W)
		- g(T_V X, T_W Y) \\
		&+ g(A_X Y, A_Y W)-\frac{r^M}{n(n-1)} \{-g(X,Y)g(V,W)\},
	\end{align*}
	\begin{align*}
		g(C^*(U,V)W, X) &= g((\nabla_U T)_V W, X) - g((\nabla_V T)_U W, X)
	\end{align*}
	and
	\begin{align*}
		g(C^*(U,V)W, F)&= g( \hat{R} (U,V)W , F)+ g (T_{U}W, T_{V}F )- g (T_{V}W, T_{U}F) \\
		&- \frac{r^M}{n(n-1)} \Bigg\{ g(V,W)g(U,F)- g(U,W)g(V,F)\Bigg\}
	\end{align*}
	where
	\begin{equation*}
	r^M= \hat{r}+r^G \circ \pi -||A||^2- ||T||^2.
	\end{equation*}
\end{theorem}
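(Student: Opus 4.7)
The plan is to substitute the definition of the concircular curvature tensor,
\[
C^*(X,Y)Z = R^M(X,Y)Z - \frac{r^M}{n(n-1)}\bigl[g(Y,Z)X - g(X,Z)Y\bigr],
\]
into each of the six inner products $g(C^*(\cdot,\cdot)\cdot,\cdot)$ listed in the statement, and then replace $g(R^M(\cdot,\cdot)\cdot,\cdot)$ using the corresponding equation from Theorem \ref{thm1}. The key ancillary fact is that $\ker\pi_*$ and $\ker\pi_*^{\perp}$ are $g$-orthogonal, so $g(X,V)=0$ whenever $X\in\chi^h(M)$ and $V\in\chi^v(M)$; this will force most of the correction terms $g(Y,Z)g(X,\cdot)-g(X,Z)g(Y,\cdot)$ to drop out.

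I would treat the cases in the order stated. For the first identity (all four arguments horizontal), use \eqref{g2} for $R^M$ and keep both bilinear correction terms, since $g(X,H)$, $g(Y,H)$, $g(Y,Z)$, $g(X,Z)$ can all be nonzero. For the second identity (three horizontal, one vertical), orthogonality gives $g(X,V)=g(Y,V)=0$, so the entire correction vanishes and only \eqref{g3} remains. Similarly, for the third and fifth identities both correction pieces are killed by orthogonality, leaving just \eqref{g4} and \eqref{g6}. For the fourth identity $g(C^*(X,V)Y,W)$, only the product $g(V,W)g(X,Y)$ survives among the four mixed terms, matching the stated $-\frac{r^M}{n(n-1)}\{-g(X,Y)g(V,W)\}$. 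For the sixth (all four arguments vertical), both correction terms survive and combine with \eqref{g1}.

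The closing formula for $r^M$ is obtained by applying Proposition \ref{pro2} directly; note that the displayed expression suppresses the $-\|N\|^2$ and $2\sum_i g(\nabla_{X_i}N,X_i)$ contributions in \eqref{Pro2}, which presumably reflects an implicit normalization (e.g.\ vanishing mean curvature of the fibres) that ought to be flagged when writing the proof. Since every line reduces to substitution plus orthogonality of the two distributions, there is no real technical obstacle here; the only care needed is the bookkeeping of which of the two bilinear correction terms survive in each case, and the correct sign when pairing the Weyl-type correction against the O'Neill equations. The proof proper can then be abbreviated in the same style the authors used for the preceding theorem, carrying out one representative case (say, the mixed case $g(C^*(X,V)Y,W)$, which is the only one whose correction is nontrivial but partially collapses) in full and leaving the others to the reader.
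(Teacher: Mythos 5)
Your proposal is correct and follows essentially the same route as the paper: substitute the definition of $C^*$, replace $g(R^M(\cdot,\cdot)\cdot,\cdot)$ by the relevant O'Neill identity from Theorem \ref{thm1}, and use the orthogonality of the vertical and horizontal distributions to discard the vanishing correction terms (the paper carries out the second identity explicitly; you propose the fourth, the only mixed case with a surviving correction). Your side remark about the closing formula for $r^M$ is also apt, since the displayed expression omits the $-||N||^2$ and $2\sum_i g(\nabla_{X_i}N,X_i)$ terms of Proposition \ref{pro2} and therefore requires $N=0$, a hypothesis the theorem does not state.
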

\begin{proof}
	Let's prove the $2^{nd}$ equation of this theorem. Taking inner product $C^*$ with $V$ then we have
	\begin{align*}
		g(C^*(X,Y)Z,V) = g(R(X,Y)Z,V) -\frac{r^M}{n(n-1)} \{g(Y,Z)g(X,V)-g(X,Z)\}.
	\end{align*}
	Then using equation \eqref{g3}, we get
	\begin{align*}
		g(C^*(X,Y)Z,V)&=-g((\nabla_Z A)_X Y,V)- g(A_X T, T_V Z) \\
		&+g(A_Y Z, T_V X)- g(A_X Z, T_V Y).
	\end{align*}
which completes the proof of the second equation. Other equations are similarly proved by using Theorem \ref{thm1}, Proposition \ref{pro1} and Proposition \ref{pro2}.
\end{proof}
\begin{corollary}
Let $\pi: (M, g) \to(G, g^\prime)$ be a Riemannian submersion, where $(M, g)$ and $(G, g^\prime)$ Riemannian manifolds. Then
the concircular curvature tensor of Riemannian submersion has no total umbilical fibres.
\end{corollary}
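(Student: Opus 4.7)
The plan is to establish the corollary by a direct inspection of the six concircular identities derived in the preceding theorem, comparing them with the corresponding Weyl projective identities from Section 2. I would first classify each identity according to whether the horizontal mean curvature vector $N$ appears explicitly. Four of the six, namely those computing $g(C^*(X,Y)Z,V)$, $g(C^*(X,Y)V,W)$, $g(C^*(X,V)Y,W)$, and $g(C^*(U,V)W,X)$, are built solely from the O'Neill tensors $A$, $T$, and their covariant derivatives, together with $\hat R$ and the scalar $r^M$. None of these four carries a term linear in $N$ or in $\nabla N$, so the hypothesis $N=0$ cannot produce any cancellation among them.

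For the remaining two identities, $g(C^*(X,Y)Z,H)$ and $g(C^*(U,V)W,F)$, the only channel through which $N$ can enter is the scalar curvature $r^M$. Here I would invoke Proposition \ref{pro2}: under $N=0$, the general formula reduces to $r^M = \hat r + r^G \circ \pi - \|A\|^2 - \|T\|^2$, precisely the expression appearing at the end of the statement of the preceding theorem. The algebraic skeleton of the two identities is otherwise untouched; the blocks $-\frac{r^M}{n(n-1)}\{g(X,H)g(Y,Z)-g(Y,H)g(X,Z)\}$ and $-\frac{r^M}{n(n-1)}\{g(V,W)g(U,F)-g(U,W)g(V,F)\}$ survive intact and are generically nonzero.

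The remaining step is to draw the contrast with the Weyl projective curvature tensor, whose identities contained genuine $N$-linear terms such as $\tfrac{1}{2}(g(\nabla_Y N,Z)+g(\nabla_Z N,Y))$ and $g(N,T_V W)$; those terms collapsed under $N=0$, which is exactly what produced the nontrivial simplification recorded in the previous corollary. For $C^*$, no analogous $N$-linear contribution is present, so imposing the totally umbilical hypothesis produces no genuine simplification beyond the passive reduction of the scalar $r^M$. I expect the main obstacle to be purely expository rather than technical: the proof is an enumeration and comparison of the six identities rather than a computation, and care must be taken to phrase the conclusion, namely that no ``totally umbilical fibres'' specialization of $C^*$ exists along $\pi$, in a way that faithfully captures the statement of the corollary.
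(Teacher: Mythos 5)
Your reading and argument are correct and coincide with the paper's intent: the paper supplies no written proof of this corollary at all, and the justification it implicitly relies on is exactly your observation that none of the six concircular identities contains an explicit $N$- or $\nabla N$-term, so the hypothesis $N=0$ produces no new formulas beyond the passive reduction of $r^M$ via Proposition \ref{pro2} to $\hat r + r^G\circ\pi - \|A\|^2 - \|T\|^2$ (which is, tellingly, already the expression displayed in the theorem). Your enumeration and the contrast with the Weyl projective case are therefore, if anything, more complete than what the paper offers.
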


\section{Conharmonic curvature tensor along a Riemannian submersion}
In this section, curvature relations of conharmonic curvature tensor in a Riemannian submersion are examined.
\begin{definition}\rm \label{defconhar}
	In the $n-$dimensional space $V_n$, the tensor
	\begin{align*}
		L^*(X,Y,Z,H)&= R^M(X,Y,Z,H)- \frac{1}{n-2}[g(Y,Z)Ric(X,H)-g(X,Z)Ric(Y,H),
	\end{align*}
	is called conharmonic curvature tensor, where Ricci tensor denoted by $Ric$ \cite{mishra}.
\end{definition}
In a similar way, we have the following main theorem.
\begin{theorem}
	Let $(M, g)$ and $(G, g^\prime)$ Riemannian manifolds,
	$$\pi: (M, g) \to(G, g^\prime)$$
	a Riemannian submersion and $R^M$, $R^G$ and $\hat{R}$ be Riemannian curvature tensors, $S^M$, $S^G$ and $\hat{S}$ be Ricci tensors of $M, \, G$ and the fibre respectively. Then for any $U, V, W, F \in \chi^v (M)$ and $X, Y, Z, H\in  \chi^h (M)$, we have the following relations
	\begin{align*}
		&g(L^* (X,Y)Z, H)\\
		&= g(R^G (X,Y)Z, H) +2 g (A_X Y, A_Z H) - g(A_Y Z, A_X H) + g(A_X Z, A_Y H) \\
		&- \frac{1}{(n-2)} \Bigg\{g(Y,Z) \bigg[S^G (X',H') \circ \pi + \frac{1}{2} \left(g(\nabla_X N,H)+ g(\nabla_H N, X)\right)  \\
		&-2 \sum_{i} g(A_X X_i, A_H X_i)- \sum_{j}g(T_{U_j}X, T_{U_j}H)  \bigg] \\
		&-g(X,Z) \bigg[S^G (Y',H') \circ \pi + \frac{1}{2} \left(g(\nabla_Y N,H)+ g(\nabla_H N, Y)\right) \\
		&-2 \sum_{i} g(A_Y X_i, A_H X_i)- \sum_{j}g(T_{U_j}Y, T_{U_j}H)  \bigg] \\
		&+g(X, H) \bigg[S^G (Y',Z') \circ \pi
		+\frac{1}{2} \left(g(\nabla_Y N,Z)+g(\nabla_Z N, Y)\right) \\
		&- 2 \sum_{i} g(A_Y X_i, A_Z X_i) - \sum_{j} g(T_{U_j}Y, T_{U_j}Z) \Bigg] \\
		&-g(Y,H) \bigg[S^G (X',Z') \circ \pi + \frac{1}{2} \left(g(\nabla_X N,Z)+ g(\nabla_Z N, X)\right) \\
		&-2 \sum_{i} g(A_X X_i, A_Z X_i)- \sum_{j}g(T_{U_j}X, T_{U_j}Z)  \bigg] \Bigg\},
	\end{align*}
	
	\begin{align*}
		&g(L^* (X,Y)Z, V) \\
		&=-g((\nabla_Z A)_X Y,V) - g(A_X T, T_V Z) + g(A_Y Z, T_V X)- g(A_X Z, T_V Y) \\
		&- \frac{1}{(n-2)} \Bigg\{g(Y, Z) \bigg[g(\nabla_X N,V)
		-\sum_{j} g((\nabla_{U_j} T)_{U_j} X, V) \\
		&+\sum_{i}\left(g((\nabla_{X_i}A)_{X_i} X,V )- 2 g (A_V X_i, T_X X_i) \right) \bigg] \\
		&  -g(X,Z) \bigg[g(\nabla_Y N,V)
		-\sum_{j} g((\nabla_{U_j} T)_{U_j} Y, V) \\
		&+ \sum_{i}\left(g((\nabla_{X_i}A)_{X_i} Y,V  ) - 2 g (A_V X_i, T_Y X_i) \right) \bigg] \Bigg\},
	\end{align*}
	
	\begin{align*}
		g(L^* (X,Y)V, W)&= g((\nabla_V A)_X Y, W)- g((\nabla_W A)_X Y, V)+ g(A_X V, A_Y W)\\
		&- g(A_X W, A_Y V) - g(T_V X, T_W Y) + g(T_W X, T_V Y),
	\end{align*}
	
	\begin{align*}
		&g(L^* (X,V)Y ,W)\\
		&=g((\nabla_X T)_V W, Y) + g((\nabla_V A)_X Y,W)- g(T_V X, T_W Y) + g(A_X Y, A_Y W)\\
		&-\frac{1}{(n-2)} \bigg\{ - g(V,W) \Big[ S^G (X', Y') \circ \pi + \frac{1}{2} \left(g(\nabla_X N,Y)+ g(\nabla_Y N,X)\right) \\
		&- 2 \sum_{i} g(A_X X_i, A_Y X_i) -\sum_{j} g(T_{U_j}X, T_{U_j}Y) \Big]\\
		& - g(X,Y) \Big[\hat{S} (V,W) -g(N,T_V W) + \sum_{i}\left(g((\nabla_{X_i}T)_V W,X_i) \right.\\
		& \left. + g(A_{X_i} V, A_{X_i}W)  \right) \Big] \bigg\},
	\end{align*}
	\begin{align*}
		g(L^* (U,V)W ,X)
		&=g((\nabla_U T)_V W, X) - g((\nabla_V T)_U W, X)\\
		&- \frac{1}{(n-2)} \bigg\{g(V,W) \Big[g(\nabla_U N,X) -\sum_{j} g(\nabla_{U_j}T)_{U_j}U,X)  \\
		&+ \sum_{i} \{g((\nabla_{X_i}A)_{X_i} X,U)-2g(A_X {X_i}, T_U X_i)\}\Big] \\
		&- g(U,W)  \Big[g(\nabla_V N,X) -\sum_{j} g(\nabla_{U_j}T)_{U_j}V,X)  \\
		&+ \sum_{i} \{g((\nabla_{X_i}A)_{X_i} X,V)-2g(A_X {X_i}, T_V X_i)\}\Big] \bigg\}
	\end{align*}
	and
	\begin{align*}
		g(L^* (U,V)W ,F)&= g(\hat{R} (U,V)W, F) + g(T_U W, T_V F) - g(T_V W, T_U F ) \\
		&- \frac{1}{(n-2)} \bigg\{g(V,W) \Big[\hat{S}(U,F) -g(N, T_U F)\\
		&+ \sum_{i}\left(g((\nabla_{X_i}T)_U F, X_i)+g(A_{X_i}U,A_{X_i}F)\right)\Big]\\
		&-g(U,W) \Big[\hat{S}(V,F)-g(N,T_V F) \\
		&+ \sum_{i}\left(g((\nabla_{X_i}T)_V F, X_i)+ g(A_{X_i} V, A_{X_i}F)\right)\Big] \\
		&+g(F,U) \Big[ \hat{S}(U,V)- g(N, T_U V) \\
		& + \sum_{i}\left(g((\nabla_{X_i}T)_U V, {X_i})+ g(A_{X_i}U, A_{X_i}V)\right)  \Big] \\
		&-g(F,V) \Big[ \hat{S}(U,W) -g(N,T_U W)\\
		&+ \sum_{i}\left( g((\nabla_{X_i}T)_U W, X_i)+ g(A_{X_i}U, A_{X_i}W) \right) \Big] \bigg\}.
	\end{align*}
\end{theorem}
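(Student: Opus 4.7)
My plan is to follow the same pattern that was used for the Weyl projective and concircular tensors: fix one of the six tangency patterns for the four arguments, form the inner product of $L^\ast$ with the fourth slot, and substitute the definition
$$g(L^\ast(X_1,X_2)X_3,X_4) = g(R^M(X_1,X_2)X_3,X_4) - \frac{1}{n-2}\big[g(X_2,X_3)S^M(X_1,X_4) - g(X_1,X_3)S^M(X_2,X_4) + g(X_1,X_4)S^M(X_2,X_3) - g(X_2,X_4)S^M(X_1,X_3)\big].$$
I would then replace $R^M$ by the appropriate identity from Theorem~\ref{thm1} and every surviving $S^M$ by the appropriate identity from Proposition~\ref{pro1}.

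The genuine case-work lives entirely in the four coefficients $g(X_i,X_j)$ in the Ricci correction: horizontal and vertical distributions are orthogonal, so any coefficient pairing a horizontal with a vertical vector vanishes and eliminates its term. I would organise the six configurations as follows. For $(X,Y,Z,H)$ all horizontal, all four coefficients survive, every $S^M$ is of horizontal-horizontal type and is expanded via \eqref{S2}, while the $R^M$ piece comes from \eqref{g2}. For $(X,Y,Z,V)$ with $V$ vertical, only the coefficients $g(Y,Z)$ and $g(X,Z)$ survive, producing mixed Ricci terms $S^M(X,V)$ and $S^M(Y,V)$ expanded via \eqref{S3}, with $R^M$ from \eqref{g3}. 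For $(X,Y,V,W)$ every coefficient is of mixed type, so the whole Ricci correction collapses and only the $R^M$ identity \eqref{g4} remains. For $(X,V,Y,W)$ exactly two coefficients survive, $g(X,Y)$ and $g(V,W)$, yielding a fibre-type Ricci term $S^M(V,W)$ expanded via \eqref{S1} and a horizontal-type term $S^M(X,Y)$ expanded via \eqref{S2}, together with the $R^M$ piece from \eqref{g5}. For $(U,V,W,X)$ the surviving coefficients are $g(V,W)$ and $g(U,W)$, giving mixed Ricci terms $S^M(U,X)$ and $S^M(V,X)$ expanded via \eqref{S3}, and $R^M$ from \eqref{g6}. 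For $(U,V,W,F)$ all vertical every coefficient survives, each correction is a fibre-type Ricci term treated by \eqref{S1}, and $R^M$ comes from \eqref{g1}.

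The main obstacle is not conceptual but organisational: tracking the signs in the four-term Ricci correction, pairing each surviving horizontal-horizontal coefficient with the base-Ricci expansion \eqref{S2}, each mixed coefficient with \eqref{S3}, each vertical-vertical coefficient with the fibre-Ricci expansion \eqref{S1}, and transcribing the resulting long sums without error. No new geometric identity is needed beyond Theorem~\ref{thm1} and Proposition~\ref{pro1}; the verification reduces to substitution and simplification entirely analogous to the first theorem of this section.
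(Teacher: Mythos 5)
Your proposal is correct and follows essentially the same route as the paper: substitute the four-term definition of $L^*$, use orthogonality of the horizontal and vertical distributions to discard the mixed coefficients $g(X_i,X_j)$, and expand the surviving curvature and Ricci terms via Theorem~\ref{thm1} and Proposition~\ref{pro1} (the paper writes out only the third case, where the whole Ricci correction collapses, and leaves the rest as analogous). Your case-by-case bookkeeping of which coefficients survive and which expansion of $S^M$ applies matches the stated formulas in all six configurations.
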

\begin{proof}
	Let's prove the $3^{th}$ equation of this theorem. The following equations are obtained inner production with $W$ to $L^*$ and by using equation \eqref{g4}
	\begin{align*}
		g(L^*(X,Y)V,W)&=g(R^M(X,Y)V,W)-\frac{1}{n-2} \{g(X,W)S(Y,V) \\
		&-g(Y,W)S(X,V)+g(Y,V)S(X,W)-g(X,V)S(Y,W)\}. \nonumber
	\end{align*}
	One can easily obtain the other equations by using Theorem \ref{thm1} and Proposition \ref{pro1}.
\end{proof}
\begin{corollary}
Let $\pi: (M, g) \to(G, g^\prime)$ be a Riemannian submersion, where $(M, g)$ and $(G, g^\prime)$ Riemannian manifolds. If the Riemannian submersion has total umbilical fibres, that  is $N = 0$,  then the conharmonic curvature tensor is given by
	\begin{align*}
		&g(L^* (X,Y)Z, H)\\
		&= g(R^G (X,Y)Z, H) +2 g (A_X Y, A_Z H) - g(A_Y Z, A_X H) + g(A_X Z, A_Y H) \\
		&- \frac{1}{(n-2)} \Bigg\{g(Y,Z) \bigg[S^G (X',H') \circ \pi
		-2 \sum_{i} g(A_X X_i, A_H X_i)- \sum_{j}g(T_{U_j}X, T_{U_j}H)  \bigg] \\
		&-g(X,Z) \bigg[S^G (Y',H') \circ \pi
		-2 \sum_{i} g(A_Y X_i, A_H X_i)- \sum_{j}g(T_{U_j}Y, T_{U_j}H)  \bigg] \\
		&+g(X, H) \bigg[S^G (Y',Z') \circ \pi
		- 2 \sum_{i} g(A_Y X_i, A_Z X_i) - \sum_{j} g(T_{U_j}Y, T_{U_j}Z) \Bigg] \\
		&-g(Y,H) \bigg[S^G (X',Z') \circ \pi
		-2 \sum_{i} g(A_X X_i, A_Z X_i)- \sum_{j}g(T_{U_j}X, T_{U_j}Z)  \bigg] \Bigg\},
	\end{align*}
	
	\begin{align*}
		g(L^* (X,Y)Z, V) &=-g((\nabla_Z A)_X Y,V)\!-\!g(A_X T, T_V Z) \!+\! g(A_Y Z, T_V X)\!- \!g(A_X Z, T_V Y) \\
		&- \frac{1}{(n-2)} \Bigg\{g(Y, Z) \bigg[
		-\sum_{j} g((\nabla_{U_j} T)_{U_j} X, V) \\
		&+\sum_{i}\left(g((\nabla_{X_i}A)_{X_i} X,V )- 2 g (A_V X_i, T_X X_i) \right) \bigg] \\
		&  -g(X,Z) \bigg[-\sum_{j} g((\nabla_{U_j} T)_{U_j} Y, V) \\
		&+ \sum_{i}\left(g((\nabla_{X_i}A)_{X_i} Y,V  ) - 2 g (A_V X_i, T_Y X_i) \right) \bigg] \Bigg\},
	\end{align*}
	
	\begin{align*}
		&g(L^* (X,V)Y ,W)\\
		&= g((\nabla_X T)_V W, Y) + g((\nabla_V A)_X Y,W)- g(T_V X, T_W Y) + g(A_X Y, A_Y W)\\
		&-\frac{1}{(n-2)} \bigg\{ - g(V,W) \Big[ S^G (X', Y') \circ \pi
		- 2 \sum_{i} g(A_X X_i, A_Y X_i) -\sum_{j} g(T_{U_j}X, T_{U_j}Y) \Big]\\
		& - g(X,Y) \Big[\hat{S} (V,W) + \sum_{i}\left(g((\nabla_{X_i}T)_V W,X_i)
		+ g(A_{X_i} V, A_{X_i}W)  \right) \Big] \bigg\},
	\end{align*}
	\begin{align*}
		g(L^* (U,V)W ,X)&= g((\nabla_U T)_V W, X) - g((\nabla_V T)_U W, X)\\
		&- \frac{1}{(n-2)} \bigg\{g(V,W) \Big[\sum_{j} g(\nabla_{U_j}T)_{U_j}U,X)  \\
		&+ \sum_{i} \{g((\nabla_{X_i}A)_{X_i} X,U)-2g(A_X {X_i}, T_U X_i)\}\Big] \\
		&- g(U,W)  \Big[\sum_{j} g(\nabla_{U_j}T)_{U_j}V,X)  \\
		&+ \sum_{i} \{g((\nabla_{X_i}A)_{X_i} X,V)-2g(A_X {X_i}, T_V X_i)\}\Big] \bigg\}
	\end{align*}
	and
	\begin{align*}
		&g(L^* (U,V)W ,F)\\
		&= g(\hat{R} (U,V)W, F) + g(T_U W, T_V F) - g(T_V W, T_U F ) \\
		&- \frac{1}{(n-2)} \bigg\{g(V,W) \Big[\hat{S}(U,F) \\
		&+ \sum_{i}\left(g((\nabla_{X_i}T)_U F, X_i)+g(A_{X_i}U,A_{X_i}F)\right)\Big]\\
		&-g(U,W) \Big[\hat{S}(V,F)
		+ \sum_{i}\left(g((\nabla_{X_i}T)_V F, X_i)+ g(A_{X_i} V, A_{X_i}F)\right)\Big] \\
		&+g(F,U) \Big[ \hat{S}(U,V)
		+ \sum_{i}\left(g((\nabla_{X_i}T)_U V, {X_i})+ g(A_{X_i}U, A_{X_i}V)\right)  \Big] \\
		&-g(F,V) \Big[ \hat{S}(U,W)
		+ \sum_{i}\left( g((\nabla_{X_i}T)_U W, X_i)+ g(A_{X_i}U, A_{X_i}W) \right) \Big] \bigg\}.
	\end{align*}
\end{corollary}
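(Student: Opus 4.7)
The plan is to derive the Corollary directly from the preceding Theorem by specializing to the totally umbilical case, which by hypothesis means $N = 0$, where $N = \sum_{j=1}^{r} T_{U_j} U_j$ is the horizontal vector field introduced just before Proposition \ref{pro1}. No new geometric input beyond the Theorem is needed; everything follows by a clean substitution.

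First I would take, one by one, each of the six formulas from the preceding Theorem for the conharmonic curvature tensor components $g(L^*(X,Y)Z,H)$, $g(L^*(X,Y)Z,V)$, $g(L^*(X,Y)V,W)$, $g(L^*(X,V)Y,W)$, $g(L^*(U,V)W,X)$, and $g(L^*(U,V)W,F)$. In each of them, the vector field $N$ enters only through the Ricci-tensor pieces imported from Proposition \ref{pro1}, appearing exclusively as summands of the three types $\frac{1}{2}\{g(\nabla_X N,Y)+g(\nabla_Y N,X)\}$, $g(\nabla_U N,X)$, and $g(N,T_U V)$. Substituting $N=0$ suppresses precisely these summands while leaving intact every contribution built from $R^G$, $\hat{R}$, $A$, $T$, $S^G\circ\pi$, $\hat{S}$, the energy-type sums $\sum_i g(A_{X_i}\cdot, A_{X_i}\cdot)$ and $\sum_j g(T_{U_j}\cdot, T_{U_j}\cdot)$, and the covariant-derivative sums $\sum_i g((\nabla_{X_i}A)_{X_i}\cdot,\cdot)$ and $\sum_i g((\nabla_{X_i}T)_{\cdot}\cdot,X_i)$.

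Reading off the surviving terms in each display then reproduces the five equations stated in the Corollary. Observe that the third formula $g(L^*(X,Y)V,W)$ from the preceding Theorem contains no $N$-summand at all, which is exactly why the Corollary lists only five (not six) identities.

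The only genuine obstacle is careful bookkeeping: one must preserve the outer factor $-\frac{1}{n-2}$ and the nested braces, and maintain the four-term structure of the conharmonic correction while deleting $N$-dependent summands only inside the Ricci-type bracketed factors. Since the preceding Theorem already isolates each $N$-dependence on its own summand, this reduces to a term-by-term cancellation; no further appeal to Theorem \ref{thm1}, Proposition \ref{pro1}, or Proposition \ref{pro2} is required beyond what was already used in the Theorem itself.
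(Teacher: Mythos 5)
Your proposal is correct and is exactly the paper's (implicit) argument: the corollary follows by setting $N=0$ in the six formulas of the preceding theorem, which deletes precisely the summands of the forms $\tfrac{1}{2}\{g(\nabla_X N,Y)+g(\nabla_Y N,X)\}$, $g(\nabla_U N,X)$ and $g(N,T_UV)$, and the $N$-free identity for $g(L^*(X,Y)V,W)$ is the reason only five displays remain. One minor remark: carried out carefully, your substitution leaves $-\sum_j g((\nabla_{U_j}T)_{U_j}U,X)$ in the fourth display, so the $+$ sign printed there in the corollary is a typo of the paper rather than a gap in your argument.
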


\section{Conformal curvature tensor along a Riemannian submersion}
In this section, we find some curvature relations of conformal curvature tensor in a Riemannian submersion and give a corollary in case of the Riemannian submersion has totally umbilical fibres.
\begin{definition}\rm \label{def1}
	In the $n-$dimensional space $V_n$, the tensor
	\begin{align*}
		V^*(X,Y,Z,H)&=R^M(X,Y,Z,H)-\frac{1}{n-2} [g(X,H)Ric(Y,Z)-g(Y,H)Ric(X,Z)\\
		&+g(Y,Z)Ric(X,H)-g(X,Z)Ric(Y,H)]\\
		&+\frac{r^M}{(n-1)(n-2)} [g(X,H)g(Y,Z)-g(Y,H)g(X,Z)],
	\end{align*}
	is called conformal curvature tensor, where Ricci tensor and scalar tensor denoted by $Ric$ and $r^M$  respectively \cite{mishra}.
\end{definition}
\begin{theorem}
	Let $(M, g)$ and $(G, g^\prime)$ Riemannian manifolds,
	$$\pi: (M, g) \to(G, g^\prime)$$
	a Riemannian submersion and $R^M$, $R^G$ and $\hat{R}$ be Riemannian curvature tensors, $S^M$, $S^G$ and $\hat{S}$ be Ricci tensors and $r^M$, $r^G$ and $\hat{r}$ be scalar curvature tensors of $M, \, G$ and the fibre respectively. Then for any $U, V, W, F \in \chi^v (M)$ and $X, Y, Z, H\in  \chi^h (M)$, we have the following relations
	\begin{align*}
		&g(V^* (X,Y)Z, H) \\
		&= g(R^G (X,Y)Z, H) +2 g (A_X Y, A_Z H) - g(A_Y Z, A_X H) + g(A_X Z, A_Y H) \\
		&- \frac{1}{(n-2)} \Bigg\{g(X, H) \bigg[S^G (Y',Z') \circ \pi
		+\frac{1}{2} \left(g(\nabla_Y N,Z)+g(\nabla_Z N, Y)\right) \\
		&- 2 \sum_{i} g(A_Y X_i, A_Z X_i) - \sum_{j} g(T_{U_j}Y, T_{U_j}Z) \Bigg] \\
		&-g(Y,H) \bigg[S^G (X',Z') \circ \pi + \frac{1}{2} \left(g(\nabla_X N,Z)+ g(\nabla_Z N, X)\right) \\
		&-2 \sum_{i} g(A_X X_i, A_Z X_i)- \sum_{j}g(T_{U_j}X, T_{U_j}Z)  \bigg] \\
		&+g(Y,Z) \bigg[S^G (X',H') \circ \pi + \frac{1}{2} \left(g(\nabla_X N,H)+ g(\nabla_H N, X)\right)  \\
		&-2 \sum_{i} g(A_X X_i, A_H X_i)- \sum_{j}g(T_{U_j}X, T_{U_j}H)  \bigg] \\
		&-g(X,Z) \bigg[S^G (Y',H') \circ \pi + \frac{1}{2} \left(g(\nabla_Y N,H)+ g(\nabla_H N, Y)\right) \\
		&-2 \sum_{i} g(A_Y X_i, A_H X_i)- \sum_{j}g(T_{U_j}Y, T_{U_j}H)  \bigg]\Bigg\} \\
		&+ \frac{r^M}{(n-1)(n-2)} \{g(Y,Z) g(X,H)- g(X,Z)g(Y,H)\},
	\end{align*}
	\begin{align*}
		g(V^* (X,Y)Z, V) &\!=\! -g((\nabla_Z A)_X Y,V)\!\!-\!\! g(A_X T, T_V Z)\!+\!g(A_Y Z, T_V X)\!-\!g(A_X Z, T_V Y) \\
		&- \frac{1}{(n-2)} \Bigg\{g(Y, Z) \bigg[g(\nabla_X N,V)
		-\sum_{j} g((\nabla_{U_j} T)_{U_j} X, V) \\
		&+\sum_{i}\left(g((\nabla_{X_i}A)_{X_i} X,V )- 2 g (A_V X_i, T_X X_i) \right) \bigg] \\
		&  -g(X,Z) \bigg[g(\nabla_Y N,V)
		-\sum_{j} g((\nabla_{U_j} T)_{U_j} Y, V) \\
		&+ \sum_{i}\left(g((\nabla_{X_i}A)_{X_i} Y,V  ) - 2 g (A_V X_i, T_Y X_i) \right) \bigg] \Bigg\},
	\end{align*}
	\begin{align*}
		g(V^* (X,Y)V, W)&=& g((\nabla_V A)_X Y, W)- g((\nabla_W A)_X Y, V)+ g(A_X V, A_Y W)\\
		&& - g(A_X W, A_Y V) - g(T_V X, T_W Y) + g(T_W X, T_V Y),
	\end{align*}
	\begin{align*}
		&g(V^* (X,V)Y ,W)\\
		&=g((\nabla_X T)_V W, Y) + g((\nabla_V A)_X Y,W)- g(T_V X, T_W Y) + g(A_X Y, A_Y W)\\
		&-\frac{1}{(n-2)} \bigg\{ - g(V,W) \Big[ S^G (X', Y') \circ \pi + \frac{1}{2} \left(g(\nabla_X N,Y)+ g(\nabla_Y N,X)\right) \\
		&- 2 \sum_{i} g(A_X X_i, A_Y X_i) -\sum_{j} g(T_{U_j}X, T_{U_j}Y) \Big]\\
		& - g(X,Y) \Big[\hat{S} (V,W) -g(N,T_V W) + \sum_{i}\left(g((\nabla_{X_i}T)_V W,X_i) \right.\\
		& \left. + g(A_{X_i} V, A_{X_i}W)  \right) \Big] \bigg\},
	\end{align*}
	\begin{align*}
		g(V^* (U,V)W ,X)&= g((\nabla_U T)_V W, X) - g((\nabla_V T)_U W, X)\\
		&- \frac{1}{(n-2)} \bigg\{g(V,W) \Big[g(\nabla_U N,X) -\sum_{j} g(\nabla_{U_j}T)_{U_j}U,X)  \\
		&+ \sum_{i} \{g((\nabla_{X_i}A)_{X_i} X,U)-2g(A_X {X_i}, T_U X_i)\}\Big] \\
		&- g(U,W)  \Big[g(\nabla_V N,X) -\sum_{j} g(\nabla_{U_j}T)_{U_j}V,X)  \\
		&+ \sum_{i} \{g((\nabla_{X_i}A)_{X_i} X,V)-2g(A_X {X_i}, T_V X_i)\}\Big] \bigg\}
	\end{align*}
	and
	\begin{align*}
		&g(V^* (U,V)W ,F)=g(\hat{R} (U,V)W, F) + g(T_U W, T_V F) - g(T_V W, T_U F ) \\
		&- \frac{1}{(n-2)} \bigg\{ g(F,U) \Big[ \hat{S}(V,W)- g(N, T_V W) \\
		& + \sum_{i}\left(g(\nabla_{X_i}T)_V W, {X_i}+ g(A_{X_i}V, A_{X_i}W)\right)  \Big] \\
		&-g(F,V) \Big[ \hat{S}(U,W) -g(N,T_U W)+ \sum_{i} \left(g((\nabla_{X_i}T)_U W, X_i)+ g(A_{X_i}U, A_{X_i}W) \right)\Big]\\
		&+g(V,W) \Big[\hat{S}(U,F) -g(N, T_U F)+ \sum_{i}\left(g((\nabla_{X_i}T)_U F, X_i)+g(A_{X_i}U,A_{X_i}F)\right)\Big]\\
		&-g(U,W) \Big[\hat{S}(V,F)-g(N,T_V F) + \sum_{i}\left(g((\nabla_{X_i}T)_V F, X_i)+ g(A_{X_i} V, A_{X_i}F)\right)\Big] \bigg\}\\
		&+\frac{r^M}{(n-1)(n-2)} \{g(V,W)g(U,F)-g(U,W)g(V,F)\}
	\end{align*}
	where
	\begin{equation*}
	r^M= \hat{r}+r^G \circ \pi - ||N||^2-||A||^2- ||T||^2+ 2 \sum_{i} g(\nabla_{X_i} N, X_i).
	\end{equation*}
\end{theorem}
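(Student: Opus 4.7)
The plan is to expand the defining identity
\[
V^*(X,Y,Z,H)=R^M(X,Y,Z,H)-\tfrac{1}{n-2}\bigl[g(X,H)S^M(Y,Z)-g(Y,H)S^M(X,Z)+g(Y,Z)S^M(X,H)-g(X,Z)S^M(Y,H)\bigr]+\tfrac{r^M}{(n-1)(n-2)}\bigl[g(X,H)g(Y,Z)-g(Y,H)g(X,Z)\bigr]
\]
case by case on the six combinations of horizontal/vertical inputs, and reduce each term using Theorem \ref{thm1}, Proposition \ref{pro1}, and Proposition \ref{pro2}. The underlying point is that the definition is \emph{algebraic}: the left hand side is a fixed linear combination of $R^M$, metric factors, and Ricci/scalar quantities, so once we translate each of these into submersion data the theorem follows by pure substitution. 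The orthogonality $g(\mathcal{H},\mathcal{V})=0$ is what forces the simplifications that make several families of terms vanish.

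I would work through the six equations in the order listed. For the all-horizontal case $V^*(X,Y)Z,H$, apply \eqref{g2} to the $R^M$ term and \eqref{S2} to each of the four Ricci factors; the metric coefficients $g(X,H),g(Y,H),g(Y,Z),g(X,Z)$ all remain and produce the full four-line Ricci expansion, and the scalar term $r^M/((n-1)(n-2))\{g(Y,Z)g(X,H)-g(X,Z)g(Y,H)\}$ is kept verbatim. For $V^*(X,Y)Z,V$ with $V\in\chi^v(M)$, substitute \eqref{g3} for $R^M$ and \eqref{S3} for the two surviving Ricci terms $S^M(X,V)$ and $S^M(Y,V)$; the coefficients $g(X,V)$ and $g(Y,V)$ annihilate two of the four Ricci factors and the scalar term drops entirely. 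The case $V^*(X,Y)V,W$ is the cleanest: all four metric prefactors $g(X,W),g(Y,W),g(Y,V),g(X,V)$ vanish by horizontality/verticality, so \eqref{g4} alone produces the final formula. Symmetrically, $V^*(U,V)W,X$ reduces via \eqref{g6} with all Ricci and scalar contributions killed.

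The two remaining equations follow the same template but require care with which Ricci slots survive. For $V^*(X,V)Y,W$, use \eqref{g5} for $R^M$; exactly two of the Ricci coefficients, $g(V,W)$ and $g(X,Y)$, are nonzero, so we expand $S^M(X,Y)$ by \eqref{S2} and $S^M(V,W)$ by \eqref{S1}, and again the scalar term vanishes. For the all-vertical case $V^*(U,V)W,F$, use \eqref{g1} for $\hat R$ together with \eqref{S1} on the four Ricci pieces $S^M(V,W),S^M(U,W),S^M(U,F),S^M(V,F)$; all coefficients now survive, and the scalar curvature term is retained with $r^M$ expanded via \eqref{Pro2}.

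The essential difficulty is bookkeeping rather than geometry: there are up to four Ricci expansions per equation, each of which is itself a sum of five or six tensorial pieces, and one must track which coefficients vanish by orthogonality without losing signs. I would manage this by writing each equation as a table with columns (prefactor, Ricci factor, expansion, horizontal/vertical status) and discarding rows with a zero prefactor before writing out the result. The only conceptual subtlety is that the scalar $r^M$ in the last equation must still be written in the full form of \eqref{Pro2} (retaining $\|N\|^2$ and $\sum_i g(\nabla_{X_i}N,X_i)$), since no umbilicity hypothesis is imposed in the theorem statement itself.
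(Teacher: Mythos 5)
Your overall strategy --- expand the algebraic definition of $V^*$ in each of the six slot configurations and substitute the curvature, Ricci and scalar identities of Theorem \ref{thm1}, Proposition \ref{pro1} and Proposition \ref{pro2} --- is exactly the route the paper takes (the paper writes out only the fourth case and declares the rest similar). However, your bookkeeping of which metric prefactors survive is wrong in one case. For $g(V^*(U,V)W,X)$ the prefactors $g(U,X)=g(V,X)=0$, but $g(V,W)$ and $g(U,W)$ are \emph{not} zero; consequently the mixed Ricci terms $g(V,W)S^M(U,X)-g(U,W)S^M(V,X)$ survive and must be expanded via \eqref{S3}. Your claim that this case ``reduces via \eqref{g6} with all Ricci and scalar contributions killed'' would produce only the two $\nabla T$ terms, whereas the stated formula carries the full $\frac{1}{n-2}\{g(V,W)[\cdots]-g(U,W)[\cdots]\}$ block built from \eqref{S3}. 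Only the scalar term dies there, since its coefficient is $g(U,X)g(V,W)-g(V,X)g(U,W)=0$.

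A second, smaller point: in the case $g(V^*(X,V)Y,W)$ you assert that the scalar term vanishes. Its coefficient is $g(X,W)g(V,Y)-g(V,W)g(X,Y)=-g(V,W)g(X,Y)$, which is generically nonzero. The theorem as printed also omits this term (compare with the concircular tensor, where the analogous contribution $-\frac{r^M}{n(n-1)}\{-g(X,Y)g(V,W)\}$ is retained), so this appears to be an error inherited from the statement rather than introduced by you; but your justification for dropping it is not valid, and in a correct write-up the term should either be kept or the discrepancy with the stated formula explicitly flagged.
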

\begin{proof}
	Let's prove the $4^{th}$ equation of this theorem. The following equations are obtained inner production with $W$ to $V^*$
	\begin{align*}
		g(V^*(X,V)Y,W)&=g(R^M(X,V)Y,W) -\frac{1}{n-2} \{g(X,W)S^M(V,Y) \\
		&-g(V,W)S^M(X,Y)+g(V,Y)S(X,W)-g(X,Y)S^M(V,W)\} \nonumber\\
		&+ \frac{r^M}{(n-1)(n-2)} \{g(X,W)g(Y,V)-g(X,V)g(Y,W)\}. \nonumber
	\end{align*}
	Then using equations \eqref{g5}-\eqref{S2}, we have the desired result.
	From the Theorem \ref{thm1}, Proposition \ref{pro1} and  Proposition \ref{pro2} the above equations are obtained.
\end{proof}

\begin{corollary}
Let $\pi: (M, g) \to(G, g^\prime)$ be a Riemannian submersion, where $(M, g)$ and $(G, g^\prime)$ Riemannian manifolds. If the Riemannian submersion has total umbilical fibres, that  is $N = 0$,  then the conformal curvature tensor is given by
	\begin{align*}
		&g(V^* (X,Y)Z, H) \\
		&=g(R^G (X,Y)Z, H) +2 g (A_X Y, A_Z H) - g(A_Y Z, A_X H) + g(A_X Z, A_Y H) \\
		&- \frac{1}{(n-2)} \Bigg\{g(X, H) \bigg[S^G (Y',Z') \circ \pi
		- 2 \sum_{i} g(A_Y X_i, A_Z X_i) - \sum_{j} g(T_{U_j}Y, T_{U_j}Z) \Bigg] \\
		&-g(Y,H) \bigg[S^G (X',Z') \circ \pi
		-2 \sum_{i} g(A_X X_i, A_Z X_i)- \sum_{j}g(T_{U_j}X, T_{U_j}Z)  \bigg] \\
		&+g(Y,Z) \bigg[S^G (X',H') \circ \pi
		-2 \sum_{i} g(A_X X_i, A_H X_i)- \sum_{j}g(T_{U_j}X, T_{U_j}H)  \bigg] \\
		&-g(X,Z) \bigg[S^G (Y',H') \circ \pi
		-2 \sum_{i} g(A_Y X_i, A_H X_i)- \sum_{j}g(T_{U_j}Y, T_{U_j}H)  \bigg]\Bigg\} \\
		&+ \frac{r^M}{(n-1)(n-2)} \{g(Y,Z) g(X,H)- g(X,Z)g(Y,H)\},
	\end{align*}
	\begin{align*}
		&g(V^* (X,Y)Z,V) \\
		&= -g((\nabla_Z A)_X Y,V) - g(A_X T, T_V Z) + g(A_Y Z, T_V X)- g(A_X Z, T_V Y) \\
		&- \frac{1}{(n-2)} \Bigg\{g(Y, Z) \bigg[-\sum_{j} g((\nabla_{U_j} T)_{U_j} X, V)
		+\sum_{i}\left(g((\nabla_{X_i}A)_{X_i} X,V )- 2 g (A_V X_i, T_X X_i) \right) \bigg] \\
		&-g(X,Z) \bigg[-\sum_{j} g((\nabla_{U_j} T)_{U_j} Y, V)
		+ \sum_{i}\left(g((\nabla_{X_i}A)_{X_i} Y,V  ) - 2 g (A_V X_i, T_Y X_i) \right) \bigg] \Bigg\},
	\end{align*}
	\begin{align*}
		&g(V^* (X,V)Y ,W)\\
		&=g((\nabla_X T)_V W, Y) + g((\nabla_V A)_X Y,W)- g(T_V X, T_W Y) + g(A_X Y, A_Y W)\\
		&-\frac{1}{(n-2)} \bigg\{ - g(V,W) \Big[ S^G (X', Y') \circ \pi
		- 2 \sum_{i} g(A_X X_i, A_Y X_i) -\sum_{j} g(T_{U_j}X, T_{U_j}Y) \Big]\\
		&- g(X,Y) \Big[\hat{S} (V,W) + \sum_{i}\left(g((\nabla_{X_i}T)_V W,X_i)
		+ g(A_{X_i} V, A_{X_i}W)  \right) \Big] \bigg\},
	\end{align*}
	\begin{align*}
		g(V^* (U,V)W ,X)&= g((\nabla_U T)_V W, X) - g((\nabla_V T)_U W, X)\\
		&- \frac{1}{(n-2)} \bigg\{g(V,W) \Big[\sum_{j} g(\nabla_{U_j}T)_{U_j}U,X)  \\
		&+ \sum_{i} \{g((\nabla_{X_i}A)_{X_i} X,U)-2g(A_X {X_i}, T_U X_i)\}\Big] \\
		&- g(U,W)  \Big[\sum_{j} g(\nabla_{U_j}T)_{U_j}V,X)  \\
		&+ \sum_{i} \{g((\nabla_{X_i}A)_{X_i} X,V)-2g(A_X {X_i}, T_V X_i)\}\Big] \bigg\}
	\end{align*}
	and
	\begin{align*}
		g(V^* (U,V)W ,F)&=g(\hat{R} (U,V)W, F) + g(T_U W, T_V F) - g(T_V W, T_U F ) \\
		&- \frac{1}{(n-2)} \bigg\{ g(F,U) \Big[ \hat{S}(V,W) \\
		&+ \sum_{i}\left(g(\nabla_{X_i}T)_V W, {X_i}+ g(A_{X_i}V, A_{X_i}W)\right)  \Big] \\
		&-g(F,V) \Big[ \hat{S}(U,W) + \sum_{i} \left(g((\nabla_{X_i}T)_U W, X_i)+ g(A_{X_i}U, A_{X_i}W) \right)\Big]\\
		&+g(V,W) \Big[\hat{S}(U,F) + \sum_{i}\left(g((\nabla_{X_i}T)_U F, X_i)+g(A_{X_i}U,A_{X_i}F)\right)\Big]\\
		&-g(U,W) \Big[\hat{S}(V,F) + \sum_{i}\left(g((\nabla_{X_i}T)_V F, X_i)+ g(A_{X_i} V, A_{X_i}F)\right)\Big] \bigg\}\\
		&+\frac{r^M}{(n-1)(n-2)} \{g(V,W)g(U,F)-g(U,W)g(V,F)\}
	\end{align*}
	where
	\begin{equation*}
	r^M= \hat{r}+r^G \circ \pi - ||A||^2- ||T||^2.
	\end{equation*}
\end{corollary}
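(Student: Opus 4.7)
The plan is to deduce this corollary directly from the preceding theorem by specializing to the totally umbilical case, in which the mean-curvature-type horizontal vector field $N=\sum_j T_{U_j}U_j$ vanishes identically. Since every formula in the theorem is already expressed as a sum of ``intrinsic'' tensorial terms plus correction terms built from $N$, its derivatives, and its norm, the corollary is obtained by simply crossing out the $N$-terms, provided one verifies that each such term is legitimately killed under the hypothesis $N=0$.

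More concretely, I would proceed as follows. First, I recall that in the theorem the Ricci expressions come from Proposition \ref{pro1}: each of $S^M(X,Y)$, $S^M(U,X)$, and (via Proposition \ref{pro2}) $r^M$ contributes terms of the form $g(\nabla_X N,Y)$, $g(N,T_V W)$, $g(\nabla_U N,X)$, $\|N\|^2$, and $\sum_i g(\nabla_{X_i}N,X_i)$. Under $N=0$, every such term vanishes (here one uses that $N\equiv 0$ globally, so not only $N$ but also its covariant derivatives contracted with any field vanish, which is exactly the feature being used in the corollaries for the Weyl projective and conharmonic tensors stated earlier in the paper). Then I would insert these simplified versions of $S^M$ and $r^M$ into each of the six formulas of the conformal-tensor theorem. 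The scalar curvature $r^M$ reduces to $\hat r+r^G\circ\pi-\|A\|^2-\|T\|^2$, matching the formula stated at the end of the corollary.

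The six equations of the corollary are then obtained one by one: in the first equation I drop every $\tfrac12(g(\nabla_\bullet N,\bullet)+g(\nabla_\bullet N,\bullet))$ term inside the four bracketed Ricci expressions; in the second equation I drop the $g(\nabla_X N,V)$ and $g(\nabla_Y N,V)$ terms; the third equation (which in the theorem has no $N$-dependence) is unchanged; in the fourth equation I delete $\tfrac12(g(\nabla_X N,Y)+g(\nabla_Y N,X))$ from the $S^M(X,Y)$ bracket and $-g(N,T_V W)$ from the $\hat S(V,W)$ bracket; in the fifth equation I remove the $g(\nabla_U N,X)$ and $g(\nabla_V N,X)$ terms; and in the sixth equation I remove the four $-g(N,T_\bullet \bullet)$ contributions from the four vertical Ricci brackets. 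The coefficient $\tfrac{r^M}{(n-1)(n-2)}$ is retained with the new (simplified) $r^M$.

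The main obstacle, if any, is purely organizational rather than mathematical: the formula for the conformal tensor is the longest of the five considered in the paper, so one must be careful to identify and delete the $N$-dependent pieces in every single summand without inadvertently touching the other terms, and to maintain the correct signs in front of each of the four Ricci brackets in the first and sixth equations. Once this bookkeeping is done, the six identities follow immediately from the theorem combined with the hypothesis $N=0$, and no further computation is needed.
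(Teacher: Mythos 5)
Your proposal is correct and is exactly the route the paper takes: the corollary is obtained by specializing the preceding theorem to $N\equiv 0$, deleting every term involving $N$, $\nabla N$, or $\|N\|^2$ (including those hidden inside the Ricci and scalar-curvature expressions from Propositions \ref{pro1} and \ref{pro2}), so that $r^M$ reduces to $\hat r+r^G\circ\pi-\|A\|^2-\|T\|^2$. Your explicit remark that $N$ vanishing identically (not merely pointwise) is what kills the covariant-derivative terms $g(\nabla_\bullet N,\bullet)$ is the only point requiring care, and you have handled it.
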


Finally, we investigate the $M-$projective curvature tensor on a Riemannian submersion and give a corollary in case of the totally umbilical fibres.
\section{$M$-projective curvature tensor along a Riemannian submersion}
In this section, curvature relations of $M$-projective curvature tensor in a Riemannian submersion are examined and obtain a corollary using the curvature tensor.
\begin{definition}
	Let take an $n$-dimensional differentiable manifold $M^n$ with
	differentiability class $C^{\infty }$. In 1971 on a $n$%
	-dimensional Riemannian manifold, ones \cite{PM} defined a tensor
	field $W^{\ast }$ as
	\begin{align*}
	W^{\ast }(X,Y)Z&=R^M(X,Y)Z-\frac{1}{2(n-1)}[S^M(Y,Z)X\\
	&-S^M(X,Z)Y+g(Y,Z)QX-g(X,Z)QY]\label{5.0}
	\end{align*}
	%so that%
	%\begin{equation*}
	%\, ^{\prime}W^{\ast }(X,Y,Z,U) =  g(W^{\ast }(X,Y)Z,U)=\, ^{\prime}W^{\ast
	%}(Z,U,X,Y)  \label{5.1}
	%\end{equation*}
	%and%
	%\begin{equation*}
	%^{\prime}W_{ijkl}^{\ast}w^{ij}w^{kl}=\, ^{\prime}W_{ijkl}w^{ij}w^{kl}
	%\label{5.2}
	%\end{equation*}%
	%where $^{^{\prime }}W_{ijkl}^{\ast }$ and $^{^{\prime }}W_{ijkl}$ are
	%components of $^{^{\prime }}W^{\ast }$and $^{^{\prime }}W$ respectively and $%
	%w^{kl}$ is a skew-symmetric tensor \cite{ojha,tanno}. They called this
	tensor $W^{\ast }$ as $M$-projective curvature tensor.
\end{definition}
In addition, on an $n-$dimensional Riemannian manifold $M^n$ the Ricci operator $Q$ is defined by
\begin{align*}
S^M(X,Y) = g(QX,Y).
\end{align*}
\begin{theorem}
	Let $(M, g)$ and $(G, g^\prime)$ Riemannian manifolds,
	$$\pi: (M, g) \to(G, g^\prime)$$
	a Riemannian submersion and $R^M$, $R^G$ and  $\hat{R}$ be Riemannian curvature tensors, $S^M$, $S^G$ and  $\hat{S}$ be Ricci tensors of $M, \, G$ and the fibre respectively. Then for any $U, V, W, F \in \chi^v (M)$ and $X, Y, Z, H\in  \chi^h (M)$, we have the following relations for $M$-projective curvature tensor:
	\begin{align*}
		&g(W^* (X,Y)Z, H) \\
		&=g(R^G (X,Y)Z, H) +2 g (A_X Y, A_Z H) - g(A_Y Z, A_X H) + g(A_X Z, A_Y H) \\
		&- \frac{1}{2(n-1)} \Bigg\{g(X, H) \bigg[S^G (Y',Z') \circ \pi
		+\frac{1}{2} \left(g(\nabla_Y N,Z)+g(\nabla_Z N, Y)\right) \\
		&- 2 \sum_{i} g(A_Y X_i, A_Z X_i) - \sum_{j} g(T_{U_j}Y, T_{U_j}Z) \Bigg] \\
		&-g(Y,H) \bigg[S^G (X',Z') \circ \pi + \frac{1}{2} \left(g(\nabla_X N,Z)+ g(\nabla_Z N, X)\right) \\
		&-2 \sum_{i} g(A_X X_i, A_Z X_i)- \sum_{j}g(T_{U_j}X, T_{U_j}Z)  \bigg] \\
		&+g(Y,Z) \bigg[S^G (X',H') \circ \pi + \frac{1}{2} \left(g(\nabla_X N,H)+ g(\nabla_H N, X)\right)  \\
		&-2 \sum_{i} g(A_X X_i, A_H X_i)- \sum_{j}g(T_{U_j}X, T_{U_j}H)  \bigg] \\
		&-g(X,Z) \bigg[S^G (Y',H') \circ \pi + \frac{1}{2} \left(g(\nabla_Y N,H)+ g(\nabla_H N, Y)\right) \\
		&-2 \sum_{i} g(A_Y X_i, A_H X_i)- \sum_{j}g(T_{U_j}Y, T_{U_j}H)  \bigg]\Bigg\},
	\end{align*}
	
	\begin{align*}
		g(W^* (X,Y)Z, V) &=-g((\nabla_Z A)_X Y,V) - g(A_X Y, T_V Z) + g(A_Y Z, T_V X)- g(A_X Z, T_V Y) \\
		&- \frac{1}{2(n-1)} \Bigg\{g(Y, Z) \bigg[g(\nabla_X N,V)
		-\sum_{j} g((\nabla_{U_j} T)_{U_j} X, V) \\
		&+\sum_{i}\left(g((\nabla_{X_i}A)_{X_i} X,V )- 2 g (A_V X_i, T_X X_i) \right) \bigg] \\
		&  -g(X,Z) \bigg[g(\nabla_Y N,V)
		-\sum_{j} g((\nabla_{U_j} T)_{U_j} Y, V) \\
		&+ \sum_{i}\left(g((\nabla_{X_i}A)_{X_i} Y,V  ) - 2 g (A_V X_i, T_Y X_i) \right) \bigg] \Bigg\},
	\end{align*}
	
	\begin{align*}
		g(W^* (X,Y)V, W)&= g((\nabla_V A)_X Y, W)- g((\nabla_W A)_X Y, V)+ g(A_X V, A_Y W)\\
		&- g(A_X W, A_Y V) - g(T_V X, T_W Y) + g(T_W X, T_V Y),
	\end{align*}
	
	\begin{align*}
		&g(W^* (X,V)Y ,W)=g((\nabla_X T)_V W, Y) + g((\nabla_V A)_X Y,W)- g(T_V X, T_W Y) + g(A_X Y, A_Y W)\\
		&-\frac{1}{2(n-1)} \bigg\{ - g(V,W) \Big[ S^G (X', Y') \circ \pi + \frac{1}{2} \left(g(\nabla_X N,Y)+ g(\nabla_Y N,X)\right) \\
		&- 2 \sum_{i} g(A_X X_i, A_Y X_i) -\sum_{j} g(T_{U_j}X, T_{U_j}Y) \Big] - g(X,Y) \Big[\hat{S} (V,W) -g(N,T_V W) \\
		&+ \sum_{i}\left(g((\nabla_{X_i}T)_V W,X_i)+ g(A_{X_i} V, A_{X_i}W)  \right) \Big] \bigg\},
	\end{align*}
	\begin{align*}
		g(W^* (U,V)W ,X)&=g((\nabla_U T)_V W, X) - g((\nabla_V T)_U W, X)\\
		& - \frac{1}{2(n-1)} \bigg\{g(V,W) \Big[g(\nabla_U N,X) -\sum_{j} g(\nabla_{U_j}T)_{U_j}U,X)  \\
		&+ \sum_{i} \{g((\nabla_{X_i}A)_{X_i} X,U)-2g(A_X {X_i}, T_U X_i)\}\Big] \\
		&- g(U,W)  \Big[g(\nabla_V N,X) -\sum_{j} g(\nabla_{U_j}T)_{U_j}V,X)  \\
		&+ \sum_{i} \{g((\nabla_{X_i}A)_{X_i} X,V)-2g(A_X {X_i}, T_V X_i)\}\Big] \bigg\}
	\end{align*}
	and
	\begin{align*}
		&g(W^* (U,V)W ,F)= g(\hat{R} (U,V)W, F) + g(T_U W, T_V F) - g(T_V W, T_U F ) \\
		&- \frac{1}{2(n-1)} \bigg\{ g(F,U) \Big[ \hat{S}(V,W)- g(N, T_V W) \\
		& + \sum_{i}\left(g(\nabla_{X_i}T)_V W, {X_i}+ g(A_{X_i}V, A_{X_i}W)\right)  \Big] \\
		&-g(F,V) \Big[ \hat{S}(U,W) -g(N,T_U W)+ \sum_{i} g((\nabla_{X_i}T)_U W, X_i)+ g(A_{X_i}U, A_{X_i}W) \Big]\\
		&+g(V,W) \Big[\hat{S}(U,F) -g(N, T_U F)+ \sum_{i}\left(g((\nabla_{X_i}T)_U F, X_i)+g(A_{X_i}U,A_{X_i}F)\right)\Big]\\
		&-g(U,W) \Big[\hat{S}(V,F)-g(N,T_V F) + \sum_{i}\left(g((\nabla_{X_i}T)_V F, X_i)+ g(A_{X_i} V, A_{X_i}F)\right)\Big] \bigg\}.
	\end{align*}
\end{theorem}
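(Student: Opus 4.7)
The plan is to argue by direct expansion of the defining identity for $W^{\ast}$, treating the six cases separately according to whether each of the four slots is horizontal or vertical, and in each case invoking the appropriate curvature equation from Theorem \ref{thm1} together with the appropriate Ricci identity from Proposition \ref{pro1}. First I would rewrite the definition in purely metric form: since $S^M(X,Y)=g(QX,Y)$, taking the inner product of $W^{\ast}(X,Y)Z$ with a fourth vector field $H$ yields
\begin{align*}
g(W^{\ast}(X,Y)Z,H) &= g(R^M(X,Y)Z,H)\\
&\quad - \tfrac{1}{2(n-1)}\bigl[S^M(Y,Z)g(X,H) - S^M(X,Z)g(Y,H)\\
&\quad + g(Y,Z)S^M(X,H) - g(X,Z)S^M(Y,H)\bigr].
\end{align*}
This quadrilinear form will be the master identity from which all six asserted formulas are read off.

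Next I would specialize the four arguments. For the horizontal-horizontal-horizontal-horizontal case substitute \eqref{g2} for the curvature term and \eqref{S2} for each of the four Ricci terms (noting that all four inner products $g(\cdot,\cdot)$ of horizontal vectors survive). For the case with three horizontal inputs and one vertical ($H$ replaced by $V$), the inner products $g(X,V)$ and $g(Y,V)$ vanish, killing two of the four correction terms; I use \eqref{g3} for the curvature and \eqref{S3} for the two surviving mixed Ricci terms $S^M(X,V)$ and $S^M(Y,V)$. In the two-horizontal two-vertical case $(X,Y;V,W)$, all four inner products between a horizontal and a vertical vector vanish, so the entire correction bracket drops out and the identity reduces to \eqref{g4}. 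For the alternating case $(X,V;Y,W)$, again only two orthogonality relations fail: $g(V,W)$ and $g(X,Y)$ survive, so after using \eqref{g5} for the curvature we are left with precisely the two Ricci terms $S^M(X,Y)$ (use \eqref{S2}) and $S^M(V,W)$ (use \eqref{S1}). The case $(U,V;W,X)$ yields only the curvature piece \eqref{g6} because every surviving inner product of the form (vertical, horizontal) is zero. Finally, for the all-vertical case $(U,V;W,F)$, I apply \eqref{g1} for the curvature term and \eqref{S1} four times (one for each of $S^M(V,W), S^M(U,W), S^M(U,F), S^M(V,F)$), since now all four inner products $g(U,F), g(V,F), g(V,W), g(U,W)$ are generically nonzero.

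Throughout, the $\pi$-compatible frame $\{X_i,U_j\}$ used in Proposition \ref{pro1} is the same one used to express the resulting sums, so no relabelling is required. The only genuine obstacle, rather than a routine substitution, is bookkeeping: each case produces a long expression, and one must verify that the factor $\tfrac{1}{2(n-1)}$ multiplies exactly the terms that are claimed, with the signs dictated by the antisymmetry of $W^{\ast}$ in its first pair. I would therefore treat one representative case in detail (say the hardest, $(X,V;Y,W)$, since it is the only one mixing the base-manifold Ricci formula \eqref{S2} with the fibre Ricci formula \eqref{S1} inside a single expression) and then note that the remaining five cases follow by the identical substitution scheme, exactly as the authors indicate after equation $(4)$ in the preceding theorems of the paper.
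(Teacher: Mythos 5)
Your proposal is correct and follows essentially the same route as the paper: write the defining identity of $W^{*}$ as a quadrilinear form via $S^M(X,Y)=g(QX,Y)$, then substitute the curvature identities \eqref{g2}--\eqref{g1} and the Ricci formulas \eqref{S1}--\eqref{S3} case by case, using horizontal--vertical orthogonality to discard the vanishing correction terms. The paper carries this out explicitly only for the all-vertical case and declares the rest ``similar,'' whereas you make the case analysis and the vanishing pattern explicit, but the underlying argument is identical.
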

\begin{proof}
	Let's prove the $6^{th}$ equation of this theorem.
	The following equations are obtained inner production with $F$ to $W^*$ and using \eqref{g1} and \eqref{S1} equations.
	\begin{align*}
		g(W^* (U,V)W,F)&=g(R^M(U,V)W,F)- \frac{1}{2(n-1)} \bigg\{g(F,U)S^M(U,V)\\
		&-g(F,V)S^M(U,W) +g(V,W)S^M(U,F)-g(U,W)S^M(V,F) \bigg\}, \nonumber
	\end{align*}
	\begin{align*}
		g(R^M(U,V)W,F)=g(\hat{R}(U,V)W,F)+g(T_U W, T_V F)- g(T_V W, T_U F)
	\end{align*}
	and
	\begin{align*}
		S^M(U,V)= \hat{S}(U,V)- g(N, T_U V) +  \sum_{i}\left\{g((\nabla_{X_i}T)_U V, X_i)+g(A_{X_i}U, A_{X_i}V) \right\}.
	\end{align*}
	When these equations are substituted in $W^*$, the given result is obtained. Other equations are similarly proved by using Theorem \ref{thm1} and Proposition \ref{pro1}.
\end{proof}

\begin{corollary}
Let $\pi: (M, g) \to(G, g^\prime)$ be a Riemannian submersion, where $(M, g)$ and $(G, g^\prime)$ Riemannian manifolds. If the Riemannian submersion has total umbilical fibres, that  is $N = 0$, then the $M$-projective curvature tensor is given by
	\begin{align*}
		&g(W^* (X,Y)Z, H)\\
		&= g(R^G (X,Y)Z, H) +2 g (A_X Y, A_Z H) - g(A_Y Z, A_X H) + g(A_X Z, A_Y H) \\
		&- \frac{1}{2(n-1)} \Bigg\{g(X, H) \bigg[S^G (Y',Z') \circ \pi
		- 2 \sum_{i} g(A_Y X_i, A_Z X_i) - \sum_{j} g(T_{U_j}Y, T_{U_j}Z) \bigg] \\
		&-g(Y,H) \bigg[S^G (X',Z') \circ \pi
		-2 \sum_{i} g(A_X X_i, A_Z X_i)- \sum_{j}g(T_{U_j}X, T_{U_j}Z)  \bigg] \\
		&+g(Y,Z) \bigg[S^G (X',H') \circ \pi
		-2 \sum_{i} g(A_X X_i, A_H X_i)- \sum_{j}g(T_{U_j}X, T_{U_j}H)  \bigg] \\
		&-g(X,Z) \bigg[S^G (Y',H') \circ \pi
		-2 \sum_{i} g(A_Y X_i, A_H X_i)- \sum_{j}g(T_{U_j}Y, T_{U_j}H)  \bigg]\Bigg\},
	\end{align*}
	\begin{align*}
		g(W^* (X,Y)Z, V) &=-g((\nabla_Z A)_X Y,V) - g(A_X Y, T_V Z) + g(A_Y Z, T_V X)- g(A_X Z, T_V Y) \\
		&- \frac{1}{2(n-1)} \Bigg\{g(Y, Z) \bigg[
		-\sum_{j} g((\nabla_{U_j} T)_{U_j} X, V) \\
		&+\sum_{i}\left(g((\nabla_{X_i}A)_{X_i} X,V )- 2 g (A_V X_i, T_X X_i) \right) \bigg] \\
		&  -g(X,Z) \bigg[
		-\sum_{j} g((\nabla_{U_j} T)_{U_j} Y, V) \\
		&+ \sum_{i}\left(g((\nabla_{X_i}A)_{X_i} Y,V  ) - 2 g (A_V X_i, T_Y X_i) \right) \bigg] \Bigg\},
	\end{align*}
	\begin{align*}
		&g(W^* (X,V)Y ,W)\!=\!g((\nabla_X T)_V W, Y)\!+\!g((\nabla_V A)_X Y,W)\!-\! g(T_V X, T_W Y)\!+\!g(A_X Y, A_Y W)\\
		&-\frac{1}{2(n-1)} \bigg\{ - g(V,W) \Big[ S^G (X', Y') \circ \pi
		- 2 \sum_{i} g(A_X X_i, A_Y X_i) -\sum_{j} g(T_{U_j}X, T_{U_j}Y) \Big]\\
		& - g(X,Y) \Big[\hat{S} (V,W) + \sum_{i}\left(g((\nabla_{X_i}T)_V W,X_i)
		+ g(A_{X_i} V, A_{X_i}W)  \right) \Big] \bigg\},
	\end{align*}
	\begin{align*}
		g(W^* (U,V)W ,X)&=g((\nabla_U T)_V W, X) - g((\nabla_V T)_U W, X)\\
		& - \frac{1}{2(n-1)} \bigg\{g(V,W) \Big[\sum_{j} g(\nabla_{U_j}T)_{U_j}U,X)  \\
		&+ \sum_{i} \{g((\nabla_{X_i}A)_{X_i} X,U)-2g(A_X {X_i}, T_U X_i)\}\Big] \\
		&- g(U,W)  \Big[\sum_{j} g(\nabla_{U_j}T)_{U_j}V,X)  \\
		&+ \sum_{i} \{g((\nabla_{X_i}A)_{X_i} X,V)-2g(A_X {X_i}, T_V X_i)\}\Big] \bigg\}
	\end{align*}
	and
	\begin{align*}
		&g(W^* (U,V)W ,F)\\
		&=g(\hat{R} (U,V)W, F) + g(T_U W, T_V F) - g(T_V W, T_U F ) \\
		&- \frac{1}{2(n-1)} \bigg\{ g(F,U) \Big[ \hat{S}(V,W)
		+ \sum_{i}\left(g(\nabla_{X_i}T)_V W, {X_i}+ g(A_{X_i}V, A_{X_i}W)\right)  \Big] \\
		&-g(F,V) \Big[ \hat{S}(U,W) + \sum_{i} g((\nabla_{X_i}T)_U W, X_i)+ g(A_{X_i}U, A_{X_i}W) \Big]\\
		&+g(V,W) \Big[\hat{S}(U,F) + \sum_{i}\left(g((\nabla_{X_i}T)_U F, X_i)+g(A_{X_i}U,A_{X_i}F)\right)\Big]\\
		&-g(U,W) \Big[\hat{S}(V,F) + \sum_{i}\left(g((\nabla_{X_i}T)_V F, X_i)+ g(A_{X_i} V, A_{X_i}F)\right)\Big] \bigg\}.
	\end{align*}
\end{corollary}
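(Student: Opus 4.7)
The plan is to derive the Corollary as an immediate specialisation of the preceding Theorem to the hypothesis $N = 0$. Since $N$ is the globally defined horizontal vector field $\sum_{j} T_{U_{j}} U_{j}$, the assumption $N \equiv 0$ automatically implies $\nabla_{E} N = 0$ for every vector field $E$ on $M$; consequently every term containing either $N$ or a covariant derivative $\nabla N$ that appears on the right-hand sides of the six identities in the Theorem is annihilated.

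I would then walk through the six identities one at a time and strike out each such term. In the horizontal-horizontal formula for $g(W^{*}(X,Y)Z,H)$, each of the four bracketed Ricci expressions is an expansion of $S^{M}$ via \eqref{S2}, so each loses its $\tfrac{1}{2}(g(\nabla_{\cdot} N, \cdot) + g(\nabla_{\cdot} N, \cdot))$ contribution. In the mixed formula for $g(W^{*}(X,Y)Z,V)$, the two bracketed expressions come from \eqref{S3} and lose their leading $g(\nabla_{\cdot} N, V)$ terms. The formula for $g(W^{*}(X,Y)V,W)$ contains no $N$-terms and is therefore unchanged. In the formula for $g(W^{*}(X,V)Y,W)$, the $S^{G}$-bracket loses its $\tfrac{1}{2}(g(\nabla_{X} N, Y) + g(\nabla_{Y} N, X))$ correction while the $\hat{S}$-bracket (obtained from \eqref{S1}) loses its $-g(N, T_{V} W)$ term. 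In $g(W^{*}(U,V)W,X)$ both $g(\nabla_{\cdot} N, X)$ pieces vanish. Finally, in the four-vertical formula for $g(W^{*}(U,V)W,F)$, each of the four $\hat{S}$-brackets loses its $-g(N, T_{\cdot}\cdot)$ term.

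Reassembling the six simplified right-hand sides produces exactly the identities stated in the Corollary. The only real obstacle is careful bookkeeping: every $N$-dependent summand must be located and removed with the correct sign and inside the correct bracket, in particular distinguishing the symmetric pairs such as $(X,Y)$ versus $(Y,X)$ and the corresponding vertical arguments, so that no $N$-term is inadvertently retained and no $N$-free term is accidentally dropped. There is no new geometric input beyond the implication $N = 0 \Rightarrow \nabla N = 0$ applied term-by-term to the formulas of the preceding Theorem.
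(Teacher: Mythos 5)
Your proposal is correct and is exactly the argument the paper implicitly uses: the corollary is the preceding theorem with every $N$- and $\nabla N$-dependent term deleted, justified by the observation that $N\equiv 0$ forces $\nabla_E N=0$ for all $E$. (Your term-by-term bookkeeping would in fact retain the minus sign in front of $\sum_{j} g((\nabla_{U_j}T)_{U_j}U,X)$ in the fourth identity, which the printed corollary appears to have dropped --- a typographical slip on the paper's side rather than a gap in your argument.)
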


\begin{rem}
	The authors investigate new curvature tensors along Riemannian submersions and obtain some results by using totally umbilical fibres.  Therefore, it will be worth examining new curvature tensors along Riemannian submersions. 
\end{rem}

%\begin{acknowledgement}
%The first author would like to thanks Prof. Dr. Nesip Aktan for his helpful suggestions and valuable comments which helped to improve the manuscript for his hospitality during his visit in November 2019 to Karaman, Turkey.
%\end{acknowledgement}	

\begin{ack}
Both authors would like to thank Prof. Dr. Gabriel Eduard Vilcu for some useful comments and questions that help to clarify some statements of the original manuscript.
\end{ack}

% ------------------------------------------------------------------------

\begin{thebibliography}{99}
	\bibitem{B} D. E. Blair, Riemannian geometry of contact and symplectic
	manifolds, Progress in Mathematics, 203. Birkhâuser Boston, Inc., Boston,
	MA, (2002).
	
	\bibitem{BL1} Bourguignon, J. P. and Lawson, H. B., {\it Stability and isolation
		phenomena for Yang-Mills fields}, Commun. Math. Phys. 79, (1981), 189-230.
	
	\bibitem{BL} Bourguignon, J. P. and Lawson, H. B. {\it A mathematician's visit to Kaluza-Klein theory}, Rend. Sem. Mat. Univ. Politec. Torino, Special Issue (1989), 143-163.
	
	
	%\bibitem{boothby} M. Boothby and R. C. Wong, On contact manifolds, \mathcal{A}nn.
	%Math, 68 (1958), 421--450.
	
	%\bibitem{chaubey} S. K. Chaubey and R. H. Ojha, On the $m$-projective
	%curvature tensor of a Kenmotsu manifold, Differential Geometry-Dynamical
	%Systems, 12 (2010), 52--60.
	
	\bibitem{C} B. Y. Chen, Geometry of Submanifolds, Marcel Dekker, Inc. New
	York (1973).
	
	\bibitem{FIP} M. Falcitelli, S. Ianu\c{s}, A.M. Pastore, \emph{Riemannian submersions and related topics,} World Scientific
	Publishing Co., Inc., River Edge, NJ, 2004.
	
	\bibitem{KS}  Ghosh, T. Koufogiorgos and R. Sharma, Conformally flat
	contact metric manifolds, J. Geom., 70(2001), 66--76.
	
	%\bibitem{kimpak} T. W. Kim and H. K. Pak, \emph{Canonical foliations of certain
	%classes of almost contact metric structures,} \mathcal{A}cta Math. Sinica, Eng. Ser.
	%\mathcal{A}ug., 21, 4 (2005), 841--846.
	
	\bibitem{IV} Ianus, S. and Visinescu,  M., {\it Kaluza-Klein theory with scalar
		fields and generalized Hopf manifolds}, Class. Quantum Gravity \textbf{4}, (1987),
	1317-1325.
	
	\bibitem{IV1} Ianus, S. and Visinescu, M., \textit{Space-time compaction and
		Riemannian submersions}, In: Rassias, G.(ed.) The Mathematical Heritage of C.
	F. Gauss, (1991), 358-371, World Scientific, River Edge.
	
	\bibitem{M} Mustafa, M. T., \textit{Applications of harmonic morphisms to gravity}, J. Math. Phys., \textbf{41}, (2000), 6918-6929.
	
	\bibitem{mishra} {R.S. Mishra, \emph{$H$-Projective curvature tensor in Kahler manifold,} Indian J. Pure Appl. Math. (1970) Vol. 1, 336-340. }
	
	\bibitem{OJ} R. H. Ojha, A note on the $M$-projective curvature tensor,
	Indian J. Pure Appl. Math. 8, 12 (1975), 1531--1534.
	
	%\bibitem{h.ojha} R. H. Ojha, $M$-projectively flat Sasakian manifolds,
	%Indian J. Pure \mathcal{A}ppl. Math. 17, 4 (1986), 481--484.
	
	\bibitem{O}B. O'Neill, \emph{The Fundamental Equations of a Submersions,} Michigan Math. J., 13 (1966), 459--469.
	
	%\bibitem{ozen} F. \"{O}. Zengin, \emph{$M$-projectively flat Spacetimes,} Math. Reports,
	%14(64), 4(2012), 363--370.
	%
	%\bibitem{hcn} H. \"{O}ztürk, N. \mathcal{A}ktan and C. Murathan, \emph{\mathcal{A}lmost $\alpha $
	%-Cosymplectic ($\kappa ,\mu ,\nu $)-Spaces,} arXiv:1007.0527v1 [math.DG]
	%(2010), 1--24.
	
	
	\bibitem{PM} G. P. Pokhariyal and R. S. Mishra, \emph{Curvature tensor and
		their relativistic significance II}, Yokohama Mathematical Journal 19 (1971),
	97--103.
	
	\bibitem{SH} S. Sasaki and Y. Hatakeyama, On differentiable manifolds
	with certain structures which are closely related to almost contact
	structure, Tohoku Math. J., 13 (1961), 281--294.
	
	\bibitem{Sahin} B. \d{S}ahin., {\it Riemannian submersions, Riemannian maps in Hermitian Geometry and their applications}, Elsevier, Academic Press, (2017).
	
	
	\bibitem{W1} Watson, B., {\it {G, G'-}Riemannian submersions and nonlinear
		gauge field equations of general relativity}, In: Rassias, T. (ed.) Global
	Analysis - Analysis on manifolds, dedicated M. Morse. Teubner-Texte Math.,
	57, (1983), 324-349, Teubner, Leipzig.
	
	%\bibitem{tanno} S. Tanno, Curvature tensors and non-existence of killing
	%vectors, Tensor N.S. 22 (1971), 387--394.
	%
	%\bibitem{vaisman} I. Vaisman, Conformal changes of almost contact metric
	%manifolds, Lecture Notes in Math., Berlin-Heidelberg-New York, 792 (1980),
	%435--443.
	
	%\bibitem{yano} K. Yano and M. Kon, Structures on manifolds, Series in Pure
	%Mathematics, 3. World Scientific Publishing Co., Singapore, (1984).
	
	\bibitem{Hall1} {G. Hall, Einstein's geodesic postulate, projective relatedness and Weyl's projective tensor.
	Mathematical physics, 27--35, World Sci. Publ., Hackensack, NJ, 2018} 

	\bibitem{Ahsan1} {Z. Ahsan, S.A. Siddiqui, Concircular curvature tensor and fluid spacetimes. Internat. J.
	Theoret. Phys. 48 (2009), no. 11, 3202–-3212}
	\bibitem{doric1} {Doric, M.; Petrovic-Torgasev, M.; Verstraelen, L. Conditions on the conharmonic curvature
	tensor of Kaehler hypersurfaces in complex space forms. Publ. Inst. Math. (Beograd) (N.S.)
	44(58) (1988), 97--108.}
	
\end{thebibliography}
\end{document}